\begin{document}
\theoremstyle{plain}
\newtheorem{thm}{Theorem}[section]
\newtheorem{theorem}[thm]{Theorem}
\newtheorem*{theorem2}{Theorem}
\newtheorem{lemma}[thm]{Lemma}
\newtheorem{corollary}[thm]{Corollary}
\newtheorem{corollary*}[thm]{Corollary*}
\newtheorem{proposition}[thm]{Proposition}
\newtheorem{proposition*}[thm]{Proposition*}
\newtheorem{conjecture}[thm]{Conjecture}
\theoremstyle{definition}
\newtheorem{construction}[thm]{Construction}
\newtheorem{notations}[thm]{Notations}
\newtheorem{question}[thm]{Question}
\newtheorem{problem}[thm]{Problem}
\newtheorem{remark}[thm]{Remark}
\newtheorem{remarks}[thm]{Remarks}
\newtheorem{definition}[thm]{Definition}
\newtheorem{claim}[thm]{Claim}
\newtheorem{assumption}[thm]{Assumption}
\newtheorem{assumptions}[thm]{Assumptions}
\newtheorem{properties}[thm]{Properties}
\newtheorem{example}[thm]{Example}
\newtheorem{comments}[thm]{Comments}
\newtheorem{blank}[thm]{}
\newtheorem{observation}[thm]{Observation}
\newtheorem{defn-thm}[thm]{Definition-Theorem}



\title[Heat kernel coefficients on K\"ahler manifolds]{Heat kernel coefficients on K\"ahler manifolds}
\author{Kefeng Liu}
        \address{Center of Mathematical Sciences, Zhejiang University, Hangzhou, Zhejiang 310027, China;
                Department of Mathematics,University of California at Los Angeles,
                Los Angeles, CA 90095-1555, USA}
        \email{liu@math.ucla.edu}
\author{Hao Xu}
        \address{Center of Mathematical Sciences, Zhejiang University, Hangzhou, Zhejiang 310027, China;
        Department of Mathematics, University of Pittsburgh, 301 Thackeray Hall, Pittsburgh, PA 15260, USA}
        \email{mathxuhao@gmail.com}

        \begin{abstract}
       Polterovich proved a remarkable closed formula for heat kernel coefficients of the Laplace operator on compact Riemannian manifolds
       involving powers of Laplacians acting on the distance function. In the case of K\"ahler manifolds, we prove a combinatorial formula for powers of the complex Laplacian and use it to
        derive an explicit graph theoretic formula for the numerics in heat coefficients as
a linear combination of metric jets based on Polterovich's formula.
        \end{abstract}
    \maketitle

\section{Introduction}

The Laplace operator $\Delta$ on a Riemannian manifold $(M,g)$ of dimension $d$ is given by
$$\Delta=-\frac{1}{\sqrt{\det g}}\sum_{i,j=1}^d\partial_i(g^{ij}\sqrt{\det g}\,\partial_j).$$
The heat kernel is a smooth function $H(x, y, t)\in C^\infty(M\times M\times\mathbb R^+)$ that solves the
heat equation $\frac{\partial H}{\partial t}+\Delta_x H=0$ and satisfies $H(x,y,t)=H(y,x,t)$ and
$$\lim_{t\rightarrow 0}\int_M H(x,y,t)f(y)dV=f(x)$$
for any smooth function $f$ of compact support.

For example, the heat kernel of $\mathbb R^d$ is
$$H(x,y,t)=(4\pi t)^{-d/2}e^{-|x-y|^2/4t}.$$

If $M$ is compact, there is a unique heat kernel $H(x,y,t)$ on $M$ with the asymptotic expansion as $t\rightarrow 0^+$:
\begin{equation}\label{eqh}
H(x,x,t)=(4\pi t)^{-d/2}(a_0(x)+a_1(x)t+a_2(x)t^2+\cdots).
\end{equation}
It was first proved by Minakshisundaram-Pleijel \cite{MP}. Here the coefficients $a_j(x)$ are curvature invariants, i.e., invariant polynomials of jets of metrics.

For compact $M$, there exists a
complete orthonormal basis $\{\phi_0,\phi_1,\phi_2,\dots\}$ of $L^2(M)$, consisting of eigenfunctions of $\Delta$, with corresponding eigenvalues
$\lambda_0,\lambda_1,\lambda_2,\dots$ arranged in increasing order $0 = \lambda_0 < \lambda_1\leq\lambda_2\leq\cdots$, we have
$$H(x,y,t)=\sum_{k=0}^\infty e^{-\lambda_k t}\phi_k(x)\phi_k(y)$$
with uniformly convergence for any fixed $t$. Therefore by \eqref{eqh}, we get
$$\sum_{k=0}^\infty e^{-\lambda_k t}=\int_M H(x,x,t)=(4\pi t)^{-d/2}\left({\rm Vol}(M)+t \int_M \frac{1}{6}\rho\, dV+\cdots\right).$$

The heat kernel method was extensively used in index theory
\cite{Gil2,Ros,Zha}, moduli space \cite{Liu}, spectral geometry
\cite{Cha,Zel}, complex geometry \cite{MM} and quantum gravity
\cite{Kir}. Various recursive mechanisms of computing $a_k(x)$ were
developed, and explicit formulas were known for $k\leq 5$ (see e.g.,
\cite{Avr,Gil,Gri,Sak,vdV,Vas,Wei2}).

Polterovich \cite{Pol2} proved a remarkable closed formula for all
heat kernel coefficients using a generalization of the Agmon-Kannai
expansion \cite{AK}. Polterovich's formula was applied to obtain new
formulas for KdV hierarchy \cite{Pol,Pol2} and heat invariants of
spheres \cite{Pol3}.

\begin{theorem}[Polterovich \cite{Pol2,Pol3}] Let $w\geq 3n$. Then the heat kernel coefficients $a_n(x)$ are equal to
\begin{equation}\label{eqpol}
a_n(x)=(-1)^n\sum_{j=0}^{w}\binom{w+\frac{d}{2}}{j+\frac{d}{2}}\frac{1}{4^j j!(j+n)!}\Delta^{j+n}\big(f(dist(y,x)^{2})^j\big)|_{y=x},
\end{equation}
where $dist(y,x)$ is the distance function and $f$ is an arbitrary smooth function with $f(s)=s+O(s^2)$ for $s\in [0,\epsilon]$.
\end{theorem}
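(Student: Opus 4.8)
The plan is to derive the formula from the Agmon--Kannai expansion \cite{AK}, following the two-step pattern ``reduce the invariant to resolvent data, then compute that data by an iterated parametrix''. For the reduction, combine the identity
\[
(\Delta+\lambda)^{-\ell}=\frac{1}{\Gamma(\ell)}\int_0^\infty t^{\ell-1}e^{-\lambda t}\,e^{-t\Delta}\,dt,\qquad\mathrm{Re}\,\lambda>0,
\]
with the diagonal heat expansion \eqref{eqh}: for $\ell>d/2$ the kernel of $(\Delta+\lambda)^{-\ell}$ is jointly continuous, and a Watson-type estimate of the integral gives, as $\lambda\to+\infty$,
\[
(\Delta+\lambda)^{-\ell}(x,x)\ \sim\ (4\pi)^{-d/2}\sum_{n\ge0}a_n(x)\,\frac{\Gamma(\ell-\tfrac d2+n)}{\Gamma(\ell)}\,\lambda^{\,d/2-\ell-n}.
\]
Hence it suffices to compute the coefficient of $\lambda^{\,d/2-\ell-n}$ on the left-hand side by an independent construction; equivalently, one may work directly with $e^{-t\Delta}$ and track the coefficient of $t^{\,n-d/2}$.

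For this computation, work in geodesic normal coordinates centred at $x$, where $\mathrm{dist}(x,y)^2=|y|^2$ exactly and $\Delta=\Delta_0+B$, with $B$ a differential operator whose coefficients vanish at $x$ and are controlled by the curvature. Build the resolvent kernel by the Agmon--Kannai iteration: start from the Euclidean model $(\Delta_0+\lambda)^{-\ell}$, whose kernel
\[
G_\ell(r;\lambda)=(4\pi)^{-d/2}\,\Gamma(\ell)^{-1}\int_0^\infty t^{\ell-1-d/2}e^{-\lambda t-r^2/4t}\,dt
\]
is a modified-Bessel profile depending only on $r^2$ and $\lambda$ and satisfying simple recursions in the index $\ell$ that relate $\partial_\lambda$, multiplication by $r^2$, multiplication by $\lambda$, and the shifts $\ell\mapsto\ell\pm1$; then correct the error caused by $B$ stage by stage. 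Iterating these corrections and then reorganizing the outcome --- the reorganization, which converts iterated $B$-corrections into powers of $\Delta$ applied to powers of $\mathrm{dist}^2$, is the technical heart --- one shows that the $w$-th stage, restricted to the diagonal, is a finite linear combination of the local invariants $\Delta^{j+n}(\mathrm{dist}^{2j})|_{y=x}$ against powers of $\lambda$. Extracting the $\lambda^{\,d/2-\ell-n}$ coefficient, normalizing the Bessel integrals through the elementary Euclidean value $\Delta_0^{\,j}(|y|^{2j})=(-1)^j\,4^j\,j!\,\Gamma(j+\tfrac d2)/\Gamma(\tfrac d2)$, and comparing with the expansion above, one solves for $a_n(x)$: the Gamma-factors cancel against the Bessel constants, and a Vandermonde-type identity for the generalized binomial coefficients $\binom{w+d/2}{j+d/2}=\binom{w+d/2}{w-j}$ pins the weight of $\Delta^{j+n}(\mathrm{dist}^{2j})|_{y=x}$ down to exactly $(-1)^n\binom{w+d/2}{j+d/2}(4^j\,j!\,(j+n)!)^{-1}$.

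It remains to see the truncation to $j\le w$ and the stabilization for $w\ge3n$. Under the rescaling $g\mapsto cg$ one has $\Delta\mapsto c^{-1}\Delta$ and $\mathrm{dist}^2\mapsto c\,\mathrm{dist}^2$, so each $\Delta^{j+n}(\mathrm{dist}^{2j})|_{y=x}$ transforms like $c^{-n}$, the same weight as $a_n$; moreover, in the homogeneity grading on normal coordinates every term of $B$ is non-decreasing in degree, so counting how the $j+n$ factors of $\Delta^{j+n}$ must act on the degree-$2j$ function $|y|^{2j}$ to land in degree $0$ shows that at most $n$ of them may be ``$B$-type''. Since the Minakshisundaram--Pleijel coefficient $a_n$ depends only on the $2n$-jet of the metric at $x$, a degree count then shows the $w$-th stage of the iteration already determines the $\lambda^{\,d/2-\ell-n}$ coefficient once $w\ge3n$; consequently the displayed sum is independent of $w$ in that range and equals $(-1)^n a_n(x)$. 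The step I expect to be the main obstacle is this middle one: organizing the Agmon--Kannai iteration so that its successive corrections collapse precisely into the invariants $\Delta^{j+n}(\mathrm{dist}^{2j})|_{y=x}$, with no residual covariant derivatives of curvature surviving, and extracting the sharp iteration length $w\ge3n$. Once that combinatorial structure is in place, identifying the constant $\binom{w+d/2}{j+d/2}$ is a finite algebraic verification.
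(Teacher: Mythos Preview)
The paper does not prove this theorem at all: it is stated as a cited result of Polterovich \cite{Pol2,Pol3}, and the paper's own contribution (Theorem~\ref{main}) takes Polterovich's formula as a black-box input. So there is no ``paper's own proof'' to compare against. That said, your sketch is broadly faithful to Polterovich's actual argument: the paper itself notes that the formula was obtained ``using a generalization of the Agmon--Kannai expansion \cite{AK}'', which is exactly the route you outline. Your honest admission that the middle step --- reorganizing the iterated parametrix corrections so that they collapse into the specific invariants $\Delta^{j+n}(\mathrm{dist}^{2j})|_{y=x}$ --- is the technical heart is correct, and as written your proposal is a plan rather than a proof: the combinatorial bookkeeping that produces the precise weights $\binom{w+d/2}{j+d/2}(4^j j!(j+n)!)^{-1}$ is asserted rather than carried out. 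One minor point worth noting: the paper also records (Remark~\ref{rm2}) that Weingart \cite{Wei} sharpened the truncation bound to $w\ge n$, so your effort to justify $w\ge 3n$ by a jet-counting argument, while in the right spirit, is aiming at a suboptimal threshold.
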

As noted in \cite{Pol2}, it is very difficult to convert powers of the Laplacian and the distance function to curvature tensors and
their covariant derivatives.

By Weyl's work on the invariants of the orthogonal group, any
curvature invariants on a Riemannian manifold can be formed from
Riemannian curvature tensor by covariant differentiations,
multiplications and contractions. In particular, the formal
expression of $a_n$  are universal curvature polynomials independent
of the dimension $d$.

\begin{remark}\label{rm2}
It was proved by Weingart \cite{Wei} that when $f(s)=s$,
Polterovich's formula holds for $w\geq n$. As noted by Weingart in a
private communication to us, when $f(s)=s$ Polterovich's formula was
originally proved under Riemannian normal coordinates, which are
quite different with K\"ahlerian case. In fact, Riemannian normal
coordinates are never complex analytic unless the manifold is flat,
which was first noticed by Bochner (cf. \cite[p. 22]{Ber}). In
Riemannian normal coordinates centered at the point $x = 0$, the
square distance is given locally by $dist(y,0)^2=|y|^2$. However in
K\"ahler normal coordinates centered at the point $z=0$, the square
distance contains higher order terms
$dist(z,0)^2=2|z|^2+\frac{2}{3}g_{i\bar jk\bar l}z_i\bar z_j z_k\bar
z_l+O(|z|^5)$. See Example \ref{h9}.
\end{remark}

Polterovich \cite[Thm. 2.1.3]{Pol3} showed that for general $f(s)$,
the proof of \eqref{eqpol} may be reduced to the case $f(s)=s$ under
a metric that needs only be Euclidean at the center. In fact,
repeating the proof of \cite[Thm. 1.2.1]{Pol2} gives the following
version of Polterovich's formula.
\begin{theorem}[Polterovich] Let $w\geq 3n$ and $(x_1,\dots,x_d)$ be local coordinates
on the Riemannian manifold $M$ such that the Riemannian metric at
the origin $x =0$ is Euclidean, i.e., $g_{ij}(0)=\delta_{ij}$. Then
\begin{equation}\label{eqpol2}
a_n(x)=(-1)^n\sum_{j=0}^{w}\binom{w+\frac{d}{2}}{j+\frac{d}{2}}\frac{1}{4^j
j!(j+n)!}\Delta^{j+n}(|x|^{2j})|_{x=0}.
\end{equation}
\end{theorem}

We could apply this version of Polterovich's formula in K\"ahler
normal coordinates, since the K\"ahler metric is Euclidean at the
center (up to a scaling). More precisely, if $(z_1,\dots,z_d)$ are
K\"ahler normal cooordinates centered at $z=0$. let
$z_i'=\sqrt{2}z_i$, $1\leq i\leq d$. Then $(Re z'_1,Im z'_1\dots,Re
z'_d,Im z'_d)$ is Euclidean at $z=0$.

Now we state the main result of this paper. We use graphs to
represent Weyl invariants in the sense that each vertex represents a
partial derivative of the K\"ahler metric and each edge represents
the contraction of index pairs, then the heat kernel coefficients
satisfy
\begin{equation}
a_n(z)=\sum_{G:\,w(G)=n}^{\text{stable}}z(G)\,G,
\end{equation}
where $G$ runs over stable digraphs of weight $n$ (cf. Definition \ref{df3})
and $z(G)$ is a graph invariant given by
\begin{equation}\label{eqcoeff}
z(G)=\frac{(-1)^{|V(G)|}2^{w(G)}}{|{\rm Aut}(G)|}\sum_{C\in\mathscr
C(G)}\frac{(-1)^{\mathfrak m(C)}\varphi(\Gamma_C)}{(\mathfrak
m(C)+w(G))!}.
\end{equation}
Here $\mathscr C(G)$ is the set of all $2^{|E(G)|}$ possible ways to cut edges of $G$ and $w(G)=|E(G)|-|V(G)|$ is the weight of $G$. Given an edge-cutting $C\in\mathscr C(G)$, $\mathfrak m(C)$
is the number of edges being cut and $\Gamma_C$ is a pointed graph obtained by connecting all loose ends to a new vertex $\bullet$ (cf. Definition \ref{df2}). Finally $\varphi(\Gamma_C)$ is
the number of stable reductions of $\Gamma_C$ (see Definition \ref{df1}).

The graph invariant $z(G)$ has the property that if $G$ is a disjoint
union of connected subgraphs $G=\cup_{i=1}^k G_i$, then we have
\begin{equation}
z(G)=\prod_{j=1}^k z(G_j)/|Sym(G_1,\dots,G_k)|,
\end{equation}
where $Sym(G_1,\dots,G_k)$ is the permutation group of the
connected subgraphs. It means that we only need to know $z(G)$ for connected graphs.

Our formula will be proved in Theorem \ref{main} of the next
section. In contrast to the graph theoretic formula for Bergman
kernel coefficients proved in \cite{Xu}, the invariant $z(G)$ here
may be nonzero for weakly connected graphs.

\begin{remark}
The combinatorial structure of the numerics in heat coefficients as
a linear combination of curvature tensors was considered intractable
(see e.g., \cite{Zel}). To the best of our knowledge, the formula
\eqref{eqcoeff} gives the first general result in this direction.
Although it is for K\"ahler manifolds, there are explicit
connections between K\"ahlerian and Riemannian curvature tensors.
Some special cases were treated in the appendix, which enables us to
use the formula \eqref{eqcoeff} to compute $a_1$ and $a_2$ for
Riemannian manifolds in \S \ref{secgraph}. Our work shows the
effectiveness of manipulating K\"ahler tensor in terms of graphs,
which shall also be useful in understanding other asymptotic
expansion coefficients in K\"ahler geometry. As asked by Zelditch,
it would be interesting to study whether the formula \eqref{eqcoeff}
may be used to derive inverse spectral results.

\end{remark}

\

\noindent{\bf Acknowledgements} We thank Peter Gilkey, Iosif
Polterovich, Steven Rosenberg, McKenzie Wang, Gregor Weingart and
Steven Zelditch for helpful comments.

\vskip 30pt
\section{A graph theoretic formula of heat kernel coefficients} \label{secheat}
We introduce some notations and concepts of graph theory.
\begin{definition}\label{df3}
In this paper, a graph always means a \emph{multi-digraph} $G=(V,E)$, which is defined to be a finite directed graph allowing to have multi-edges and
loops. Here $V$ and $E$ are the set of vertices and edges respectively. The \emph{weight} $w(G)$ of $G$ is defined to be
$|E|-|V|$. The
adjacency matrix $A=A(G)$ of a digraph $G$ with $n$ vertices is a
square matrix of order $n$ whose entry $A_{ij}$ is the number of
directed edges from vertex $i$ to vertex $j$. The {\it outdegree}
$\deg^+(v)$ and {\it indgree} $\deg^-(v)$ of a vertex $v$ are
defined to be the number of outward and inward edges at $v$
respectively.

A vertex $v$ of $G$ is called \emph{stable} if $\deg^-(v)\geq2,\ \deg^+(v)\geq2$.
We call $G$ \emph{stable} if each vertex $v$ is stable.

A vertex $v$ of $G$ is called \emph{semistable} if $\deg^-(v)\geq1, \deg^+(v)\geq1$ and $\deg^-(v)+\deg^+(v)\geq3$.
We call $G$ \emph{semistable} if each vertex $v$ is semistable.

A digraph $G$ is {\it strongly connected} if there is a
directed path from each vertex in $G$ to every other vertex.

\end{definition}

\begin{definition}
A \emph{pointed graph} is a multi-digraph $\Gamma=(V\cup\{\bullet\},E)$
with a distinguished vertex denoted by $\bullet$. Let $\Gamma_-$ be a subgraph of $\Gamma$ obtained by removing
the distinguished vertex $\bullet$.
We call $\Gamma$ \emph{semistable} (\emph{stable}) if each ordinary vertex $v\in V(\Gamma_-)$ is semistable
(stable). The \emph{weight} $w(\Gamma)$ of $\Gamma=(V\cup\{\bullet\},E)$ is defined to be $|E|-|V|$.
Denote
by ${\rm Aut}(\Gamma)$ the set of all automorphisms of
$\Gamma$ fixing the distinguished vertex $\bullet$.

\end{definition}

\begin{definition}
A directed edge $uv$ of a semistable pointed graph $\Gamma$ is called \emph{contractible} if $u\neq v$ and at least one of the following two conditions holds:
(i) $u\in V(\Gamma_-)$ and $\deg^+(u)=1$; (ii) $v\in V(\Gamma_-)$ and $\deg^-(v)=1$.

A semistable pointed graph $\Gamma$ is called \emph{stabilizable} if after contractions of a finite number of contractible edges of
$\Gamma$, the resulting graph becomes stable, which is called the \emph{stabilization graph} of $\Gamma$ and denoted by $\Gamma^s$.
\end{definition}

The following lemma was proved in \cite[Lem. 4.5]{Xu2}.
\begin{lemma}\label{h2}
Let $\Gamma$ be a semistable graph.
\begin{enumerate}
\item[(i)] If $\Gamma$ is strongly connected, then it is stabilizable.

\item[(ii)] If $\Gamma$ is stabilizable semistable graph and its stabilization graph $\Gamma^s$ is strongly connected, then $\Gamma$ is also strongly connected.
\end{enumerate}
\end{lemma}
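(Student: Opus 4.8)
The plan is to reduce both assertions to a single \emph{one-step contraction lemma} — if $\Gamma'$ is obtained from a semistable pointed graph $\Gamma$ by contracting one contractible edge, then $\Gamma$ is strongly connected if and only if $\Gamma'$ is — together with the structural claim that a strongly connected semistable pointed graph having no contractible edge is already stable. The first ingredient I would need is two bookkeeping facts, each a short degree count: contracting a contractible edge of a semistable pointed graph again gives a semistable pointed graph, with $w(\Gamma)$ unchanged and $|V|$ decreased by one. In particular any chain of contractions terminates, at a graph with no contractible edge.

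For part (i) I first prove the structural claim in contrapositive form. Suppose a strongly connected semistable pointed $\Gamma$ has an unstable ordinary vertex $v$; semistability forces $\deg^+(v)=1$ or $\deg^-(v)=1$. Say $\deg^+(v)=1$, with $vu$ the unique outgoing edge. If $u=v$ this edge is a loop and $v$ cannot reach any other vertex, contradicting strong connectedness (the vertex $\bullet$ is always present, so $|V|\geq 2$); hence $u\neq v$ and $vu$ is contractible, proving the claim (the case $\deg^-(v)=1$ is symmetric). Now, starting from a strongly connected semistable pointed $\Gamma$, repeatedly contract a contractible edge: the easy half of the contraction lemma — any walk in the contracted graph lifts to $\Gamma$ by inserting the step $v\to u$ whenever the merged vertex is visited — keeps strong connectedness, the bookkeeping facts keep semistability and force termination, and at the end there is no contractible edge, so the terminal graph is stable. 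Thus $\Gamma$ is stabilizable.

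For part (ii), run the contraction sequence $\Gamma=\Gamma_0\to\Gamma_1\to\cdots\to\Gamma_m=\Gamma^s$ backwards: since $\Gamma^s$ is strongly connected and each $\Gamma_i$ is semistable (bookkeeping), it is enough to prove the nontrivial half of the contraction lemma, that $\Gamma'$ strongly connected implies $\Gamma$ strongly connected. Reversing all orientations interchanges the two clauses of contractibility and preserves strong connectedness and semistability, so we may assume the contracted edge is $vu$ with $v$ ordinary, $\deg^+(v)=1$ and $u\neq v$; then $\deg^-(v)\geq 2$ and $v$ carries no loop. Writing $u^\ast$ for the vertex of $\Gamma'$ obtained by merging $v$ into $u$, its incoming edges are the old incoming edges of $u$ (apart from $vu$) together with all incoming edges of $v$, and its outgoing edges are exactly those of $u$. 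Given $a,b\in V(\Gamma)$, take a directed walk in $\Gamma'$ from the image of $a$ to the image of $b$ and lift it edge by edge: edges missing $u^\ast$ lift verbatim, and each time the walk meets $u^\ast$ we route it in $\Gamma$ as ``$\dots\to v\to u\to\dots$'' or ``$\dots\to u\to\dots$'' according to whether the incoming edge used there was originally an incoming edge of $v$ or of $u$. This yields a walk from $a$ to $b$ in $\Gamma$ except when $b=v$ and the last edge of the $\Gamma'$-walk into $u^\ast$ was originally an incoming edge of $u$, in which case the lift ends at $u$; then pick an incoming neighbour $w\neq v$ of $v$ in $\Gamma$ (one exists since $\deg^-(v)\geq2$ and $v$ has no loop), lift a $\Gamma'$-walk from $u^\ast$ to the image of $w$, and append the edge $w\to v$. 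Hence $\Gamma$ is strongly connected, and part (ii) follows.

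The main obstacle is the lifting argument for part (ii): one must keep precise track of how the incoming edges of the absorbed vertex $v$ are re-attached to $u$ inside $\Gamma'$, and verify that the degenerate cases — a loop created at $u^\ast$, the incoming neighbour $w$ equal to $u$, or $u$ being the distinguished vertex $\bullet$ — are all harmless. Everything else is routine: the degree counts behind ``contraction preserves semistability,'' the termination of contraction chains, and the easy half of the contraction lemma used in part (i) need no case analysis.
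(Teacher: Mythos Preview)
The paper does not actually prove this lemma: it merely cites \cite[Lem.~4.5]{Xu2}. So there is no in-paper argument to compare against, and your proposal should be judged on its own merits.

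Your overall architecture is sound and complete. The two ingredients---the one-step contraction lemma ($\Gamma$ strongly connected $\Longleftrightarrow$ $\Gamma'$ strongly connected) and the structural claim (strongly connected semistable with no contractible edge $\Rightarrow$ stable)---together with the degree-count bookkeeping (contraction preserves semistability, decreases $|V|$, hence terminates) do yield both (i) and (ii). Your proof of the structural claim is correct, and your detailed lifting argument for the backward direction in part~(ii), including the repair step via an in-neighbour $w\neq v$ when the lift lands at $u$ instead of $v$, is carefully done and handles the degenerate cases.

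There is one slip you should fix. In part~(i) you need the \emph{forward} direction of the contraction lemma, namely $\Gamma$ strongly connected $\Rightarrow$ $\Gamma'$ strongly connected, so that strong connectedness persists along the contraction chain and the structural claim applies at the terminal graph. But the justification you give---``any walk in the contracted graph lifts to $\Gamma$ by inserting the step $v\to u$''---is a description of the \emph{backward} direction. The forward direction is the genuinely easy one (walks in $\Gamma$ project to walks in $\Gamma'$, since contraction is a graph quotient), so the repair is a one-line rewrite; just make sure the sentence in part~(i) invokes projection rather than lifting.
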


\begin{definition}
Given a strongly connected pointed graph $\Gamma$, an edge $e$ is called \emph{redundant} if its removal from $\Gamma$ produces
a strongly connected subgraph. Otherwise, it is called \emph{essential}.
\end{definition}

\begin{lemma}\label{h1}
Every strongly connected semistable pointed graph with at least two vertices has a redundant edge.
\end{lemma}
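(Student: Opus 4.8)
The plan is to argue by contradiction. Suppose the strongly connected semistable pointed graph $\Gamma$, with $|V(\Gamma)|\ge 2$, has no redundant edge; by the definition of ``redundant'' this says precisely that $\Gamma$ is \emph{minimally strongly connected}, i.e.\ $\Gamma-e$ fails to be strongly connected for every edge $e$. I will contradict semistability using the structural fact that a minimally strongly connected multi-digraph on $n\ge 2$ vertices always has at least two vertices $v$ with $\deg^+(v)=\deg^-(v)=1$. Indeed, the distinguished vertex $\bullet$ can be at most one of those two, so some ordinary vertex $v\in V(\Gamma_-)$ would have $\deg^+(v)+\deg^-(v)=2$, violating the semistability requirement $\deg^+(v)+\deg^-(v)\ge 3$. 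It therefore remains to prove this structural fact.

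I would first record two easy normalizations of a minimally strongly connected $D$: it has no loops (a loop is irrelevant to strong connectivity, hence redundant) and no two parallel edges of equal orientation (one of the two is redundant), so $D$ may be assumed simple; and $D$ has at most $2(|V(D)|-1)$ edges, since the union of an out-branching and an in-branching rooted at a common vertex is already strongly connected, so every remaining edge would be redundant. The latter estimate is, however, not by itself enough in the semistable case, which is what forces the finer count of degree-$(1,1)$ vertices.

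I would prove the structural fact by induction on $n=|V(D)|$. The base $n=2$ is the $2$-cycle. For $n\ge 3$, if $D$ is a directed cycle we are done; otherwise an ear decomposition of $D$ has a last ear, which is not a single edge (that would be redundant) and hence has an internal vertex $y$ whose only incident edges are the two ear-edges $a\to y$ and $y\to b$, so $\deg^+(y)=\deg^-(y)=1$. If $D-y$ is not strongly connected, I would form a smaller minimally strongly connected digraph $D''$ by suppressing $y$ — delete $y$, and if $a\ne b$ add the edge $a\to b$ (which is not already present, else $a\to y\to b$ would make it redundant); one checks $D''$ is still minimally strongly connected because suppressing a degree-$(1,1)$ vertex preserves reachability among the remaining vertices and $D''-e$ is obtained from the non-strongly-connected $D-e$ by that same suppression — apply the inductive hypothesis to $D''$, and reinsert $y$ by subdividing an edge, an operation that leaves all old in- and out-degrees unchanged and adds a single degree-$(1,1)$ vertex, producing the required vertices of $D$. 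If instead $D-y$ is strongly connected, then $D-y$ is itself minimally strongly connected, so it has two degree-$(1,1)$ vertices by induction, and since passing from $D-y$ back to $D$ raises only the degrees of $a$ and $b$, we are done unless those two vertices are exactly $a$ and $b$.

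The remaining configuration — $D-y$ strongly connected with $a$ and $b$ its only degree-$(1,1)$ vertices, and $a\to b\notin E(D)$ — is the main obstacle, and the point is that it cannot occur for a minimally strongly connected $D$: one shows that some edge of $D$ becomes redundant, typically the unique out-edge $a\to a^\ast$ of $a$ in $D-y$ (after deleting it, $a$ is routed through $a\to y\to b$, and $a^\ast$ remains reachable from $b$ because the in-side of $a$ and the strong connectivity of $D-y$ are untouched), with a few genuinely degenerate sub-cases handled separately (for instance when $b$'s unique out-neighbour is $a$, where one instead exhibits $a$ and $b$ as the internal vertices of an ear of $D-y$ and recurses on the smaller minimally strongly connected digraph obtained by deleting them). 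Organizing this last dichotomy cleanly is the technical heart of the proof; once the structural fact is established, Lemma~\ref{h1} follows immediately as explained.
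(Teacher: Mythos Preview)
Your reduction is sound: the claim that every minimally strongly connected digraph on $n\ge 2$ vertices has at least two vertices of in- and out-degree $1$ is a known theorem, and it immediately yields the lemma as you say. However, your proof of that structural fact is not complete. You yourself flag the ``remaining configuration'' ($D-y$ strongly connected, with $a,b$ its only degree-$(1,1)$ vertices) as the technical heart, and the argument you sketch there breaks: deleting $a\to a^\ast$ need not leave $a^\ast$ reachable, since $a^\ast$ may well have in-degree $1$ in $D-y$ (nothing you assumed forbids it), and the further sub-cases you allude to are not worked out. So as written there is a genuine gap, even though the target statement is true.

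The paper's proof is considerably shorter and avoids this structural theorem altogether. It uses exactly the branching bound you wrote down---the union of an out- and an in-arborescence rooted at $\bullet$ contains every essential edge, so there are at most $2|V(\Gamma)|-2$ essential edges---but then, instead of trying to squeeze a contradiction out of the semistable degree count (which, as you correctly note, gives only $|E|\ge\lceil(3|V|-1)/2\rceil$ and is insufficient), it first passes to the \emph{stabilization} $\Gamma^s$. In a stable pointed graph with at least two vertices each ordinary vertex has total degree $\ge 4$ and $\bullet$ has total degree $\ge 2$, so $|E(\Gamma^s)|\ge 2|V(\Gamma^s)|-1>2|V(\Gamma^s)|-2$ and $\Gamma^s$ has a redundant edge $e$. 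Since edge contraction preserves and reflects strong connectivity, $e$ is redundant in $\Gamma$ as well. The move you are missing, then, is to reduce semistable to stable via the contraction machinery already set up in the paper (Lemma~\ref{h2}), rather than to prove a general extremal result about minimally strongly connected digraphs.
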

\begin{proof}
Let $T^+(\bullet)$ and $T^-(\bullet)$ be directed spanning trees of $\Gamma$ rooted at $\bullet$ with all edges directed away from and towards $\bullet$ respectively. The existence of $T^+(\bullet)$ and $T^-(\bullet)$ is guaranteed by the strongly connectedness of $\Gamma$. Their union $T^+(\bullet)\cup T^-(\bullet)$ is a strongly connected spanning subgraph of $\Gamma$ and contains all essential edges of $\Gamma$.
This implies that $\Gamma$ contains at most $2|V(\Gamma)|-2$ essential edges.

If $\Gamma$ is stable and $|V(\Gamma)|\geq2$, then $2|E(\Gamma)|\geq 4(|V(\Gamma)|-1)+2$, i.e., $|E(\Gamma)|\geq 2|V(\Gamma)|-1$. Thus $\Gamma$ must contain
at least one redundant edge.

If $\Gamma$ is merely semistable, consider its stabilizatoin graph $\Gamma^s$, which must contain a redundant edge $e$. It is not difficult to see that
$e$ is also redundant in $\Gamma$.
\end{proof}

\begin{definition}\label{df1}
In relation to the expression of $\square^k$ in Proposition
\ref{h3}, we introduce a graph invariant $\varphi(\Gamma)$ for any
pointed graph $\Gamma$ as follows:

(i) if $\Gamma$ is not strongly connected, then $\varphi(\Gamma)=0$,

(ii) if $\Gamma$ has only one vertex with $l$ loops, then $\varphi(\Gamma)=l!$,

(iii) if there is an ordinary vertex $v\in V(\Gamma_-)$ that satisfies $\deg^+(v)=\deg^-(v)=1$, denote by $\Gamma/\{v\}$ the graph obtained by smoothing out $v$ in $\Gamma$ (i.e., removing $v$ and connecting its two neighboring vertices), then $\varphi(\Gamma)=\varphi(\Gamma/\{v\})$,

(iv) if $\Gamma$ is a strongly connected pointed graph, then
\begin{equation}
\varphi(\Gamma)=\sum_{e\in E(\Gamma)}\varphi(\Gamma-\{e\}),
\end{equation}
where $\Gamma-\{e\}$ denotes deleting an edge $e$ from $\Gamma$ while keeping the endpoints.

\end{definition}

\begin{table}[h] \footnotesize
\caption{$\varphi(\Gamma)$ for strongly connected $\Gamma$ with $w(\Gamma)\leq3$}\label{tb}
\begin{tabular}{|c|c|c|c|c|c|}
\hline $\bullet\, 1$
     & $\bullet\, 2$
     & $\xymatrix{
        *+[o][F-]{1} \ar@/^/[r]^1
         &
        \bullet\, \ar@/^/[l]^1}$
     & $\bullet\, 3$
     & $\xymatrix{
        *+[o][F-]{1} \ar@/^/[r]^1
         &
        \bullet\, 1 \ar@/^/[l]^1}$
     & $\xymatrix{ \circ \ar@/^/[r]^2 & \bullet \ar@/^/[l]^2 }$

\\
\hline $1$ & $2$ & $1$ & $6$ & $3$ & $8$
\\
\hline $\xymatrix{
        *+[o][F-]{1} \ar@/^/[r]^1
         &
        \bullet \ar@/^/[l]^2}$
     & $\xymatrix{
        *+[o][F-]{1} \ar@/^/[r]^2
         &
        \bullet \ar@/^/[l]^1}$
     & \begin{minipage}{0.6in}$\xymatrix@C=2mm@R=5mm{
                & \bullet \ar[dr]^{1}             \\
         \circ \ar[ur]^{1} \ar@/^0.3pc/[rr]^{1} & &  \circ  \ar@/^0.3pc/[ll]^{2}
         }$
         \end{minipage}
     & \begin{minipage}{0.6in}
             $\xymatrix@C=2mm@R=5mm{
                & \bullet \ar@/^-0.3pc/@<0.2ex>[dl]^{1}             \\
         \circ \ar@/^0.3pc/@<0.7ex>[ur]^{1} \ar@/^-0.3pc/@<0.2ex>[rr]^{1} & & *+[o][F-]{1}
         \ar@/^0.3pc/@<0.7ex>[ll]^{1}
         }$
         \end{minipage}
     & $\xymatrix{
        *+[o][F-]{2} \ar@/^/[r]^1
         &
        \bullet \ar@/^/[l]^1}$
     & \begin{minipage}{0.6in}$\xymatrix@C=2mm@R=5mm{
                & \bullet \ar[dr]^{1}             \\
        *+[o][F-]{1} \ar[ur]^{1}  & &  *+[o][F-]{1} \ar[ll]^{1}
         }$
         \end{minipage}
\\
\hline $4$ & $4$ & $4$ & $1$ & $2$ & $2$
\\
\hline
\end{tabular}
\end{table}

\begin{remark}\label{rm1}
Define a \emph{strong reduction} of a strongly connected pointed
graph $\Gamma$ to be a procedure: at each step removes a redundant
edge and smoothes out ordinary vertices $v\in V(\Gamma_-)$ with
$\deg^+(v)=\deg^-(v)=1$ until a single vertex $\bullet$ is reached.
Then $\varphi(\Gamma)$ counts the number of all strong reductions of
$\Gamma$. It is not difficult to see that a strong reduction of
$\Gamma$ removes exactly $w(\Gamma)$ edges, since smoothing out a
vertex reduces the number of edges by one. Lemma \ref{h1} implies
that $\varphi(\Gamma)>0$ when $\Gamma$ is strongly connected.
\end{remark}

\begin{lemma}\label{h8}
If two strongly connected semistable pointed graph $\Gamma_1$ and $\Gamma_2$
have the same stablization graphs, then $\varphi(\Gamma_1)=\varphi(\Gamma_2)$.
\end{lemma}
\begin{proof}
We only need to prove that for a strongly connected semistable pointed graph $\Gamma$
and its stabilization graph $\Gamma^s$, we have $\varphi(\Gamma)=\varphi(\Gamma^s)$.
This follows from the fact that if we remove a contractible edge from $\Gamma$, the
resulting graph is not strongly connected.
\end{proof}

 Let $(M,g)$ be a K\"ahler manifold of dimension
$d$.
We use Einstein summation convention that repeated indices are implicitly summed over. The indices
$i,j,k,\dots$ run from $1$ to $n$, while Greek indices
$\alpha,\beta,\gamma$ may represent either $i$ or $\bar i$. Let
$(g^{i\bar j})$ be the inverse of the matrix $(g_{i\bar j})$. We
also use the notation
$g_{i\bar
j\alpha_1\alpha_2\dots\alpha_m}:=\partial_{\alpha_1\alpha_2\dots\alpha_m}g_{i\bar j}$.

Recall that at each point $x$ on a K\"ahler manifold, there
exists a normal coordinate system such that at $x$ the K\"ahler metric satisfies
\begin{equation*}
g_{i\bar j}(x)=\delta_{ij}, \qquad g_{i\bar j k_1\dots
k_r}(x)=g_{i\bar j \bar l_1\dots\bar l_r}(x)=0
\end{equation*}
for all $r\leq N\in \mathbb N$, where $N$ can be chosen arbitrarily
large.

The curvature tensor is given by
\begin{equation}\label{eqcur1}
R_{i\bar jk\bar l} =-g_{i\bar j k\bar l}+g^{m\bar p}g_{m\bar j\bar
l}g_{i\bar p k}.
\end{equation}
The covariant derivative of a covariant tensor field
$T_{\beta_1\dots\beta_p}$ is defined by
\begin{equation}\label{eqcur4}
T_{\beta_1\dots\beta_p/\gamma}=\partial_{\gamma}T_{\beta_1\dots\beta_p}-\sum_{i=1}^p
\Gamma_{\gamma\beta_i}^{\delta}T_{\beta_1\dots\beta_{i-1}\delta\beta_{i+1}\dots\beta_p},
\end{equation}
where the Christoffel symbols $\Gamma_{\beta\gamma}^\alpha=0$ except
for $\Gamma_{jk}^i=g^{i\bar l}g_{j\bar l k}$, $\Gamma_{\bar j\bar
k}^{\bar i}=g^{l\bar i}g_{l\bar j\bar k}$. The above two identities
can be used to convert partial derivatives of metrics to covariant
derivatives of curvature tensors and vice versa around a normal
coordinate. If we use $D$ and $P$ to denote the conversions, we have
for example
\begin{gather*}
D(g_{i\bar j k\bar l})=-R_{i\bar j k\bar l},\qquad D(g_{i\bar j
k\bar l\alpha})=-R_{i\bar j k\bar l/\alpha},\\
P(R_{i\bar jk \bar l/ p\bar q})=-g_{i\bar jk \bar l p\bar q}
+g^{s\bar t}(g_{p\bar j s\bar l}g_{i\bar q k\bar t}+g_{k\bar j s\bar
l}g_{p\bar q i\bar t}+g_{i\bar j s\bar l}g_{k\bar q p\bar t}).
\end{gather*}
Closed-form expressions for $D$ and $P$ in terms of summations over
trees were obtained in \cite{XY}.

Thanks to the K\"ahler condition $\partial_i g_{j\bar k}=\partial_j g_{i\bar k}$ and $\partial_{\bar
l}g_{j\bar k}=\partial_{\bar k} g_{j\bar l}$, we can canonically associate a polynomial in the variables $\{g_{i\bar j\,\alpha}\}_{|\alpha|\geq1}$ to a stable graph $G$,
such that each vertex represents a partial derivative of $g_{i\bar j}$ and each edge represents the contraction of a pair of barred and unbarred indices.
Similarly a pointed graph $\Gamma$ represents a differential operator.

Next we prove a formula for $P(f_{/\beta_1\cdots\beta_k})$ as a
summation over stable pointed decorated trees (cf. {\cite[\S
4]{XY}}).
\begin{definition}
A \emph{pointed decorated tree} $T$ is a directed tree with a
distinguished vertex $\bullet$ such that each vertex of $T$ is
decorated by a finite number of outward and inward external-legs,
corresponding to unbarred and barred indices respectively. $T$ is
called \emph{stable} if each non-distinguished vertex is stable.
\end{definition}

Denote by $\mathscr P_f(\beta_1\cdots\beta_k)$ the set of pointed
decorated trees $T$ that can be obtained by starting from the
single-vertex $\bullet$ and letting $\beta_i,\,1\leq i\leq k$
consecutively act on vertices, external-legs or edges, corresponding
to (i), (ii) and (iii) in the following:
\begin{enumerate}

\item[i)] Action of a half-edge $i$ or $\bar i$ on a vertex $v$
\begin{equation*}
\begin{tabular}{c}
\xymatrix@C=5mm@R=3mm{\ar[dr]^{\bar k} && &\\ &\circ \ar[r]^{e} & v
\ar[ur]^j \ar[dr] &
\\\ar[ur] &&&}
\end{tabular}
\Longrightarrow
\begin{tabular}{c}$\xymatrix@C=5mm@R=3mm{\ar[dr]^{\bar k} && &\\ &\circ
\ar[r]^{e} & v \ar[r]  \ar[ur]^j \ar[dr] & {\scriptstyle i}
\\\ar[ur] &&&}$ \end{tabular} \text{ or }
\begin{tabular}{c}$\xymatrix@C=5mm@R=3mm{\ar[dr]^{\bar k} && &\\
&\circ \ar[r]^{e} & v \ar[ur]^j \ar[dr] & {\scriptstyle \bar i}
\ar[l]
\\\ar[ur] &&&}$ \end{tabular}
\end{equation*}
\item[ii)] A half-edge $i$ or $\bar i$ may act on an external-leg with the same direction
 by putting them together on a new vertex
\begin{equation*}
\begin{tabular}{c}
\xymatrix@C=5mm@R=3mm{\ar[dr]^{\bar k} && &\\ &\circ \ar[r]^{e} & v
\ar[ur]^j \ar[dr] &
\\\ar[ur] &&&}
\end{tabular}
\Longrightarrow
\begin{tabular}{c}$\xymatrix@C=5mm@R=3mm{  \ar[r]^{\bar k} &\circ \ar[dr] & & &\\
\ar[ur]_{\bar i} & &\circ \ar[r]^e & v \ar[ur]^j \ar[dr]&
\\  & \ar[ur] & & &}$ \end{tabular}
\text{ or }
\begin{tabular}{c}$\xymatrix@C=5mm@R=3mm{ \ar[dr]^{\bar k} & & &\circ \ar[r]^j \ar[dr]_i &\\  &\circ \ar[r]^e & v \ar[ur] \ar[dr] & &
\\ \ar[ur] & & & &}$ \end{tabular}
\end{equation*}
\item[iii)] A half-edge $i$ or $\bar i$ may act on an edge $e$ by inserting it in the middle of $e$
\begin{equation*}
\begin{tabular}{c}
\xymatrix@C=5mm@R=3mm{\ar[dr]^{\bar k} && &\\ &\circ \ar[r]^{e} & v
\ar[ur]^j \ar[dr] &
\\\ar[ur] &&&}
\end{tabular}
\Longrightarrow
\begin{tabular}{c}$\xymatrix@C=5mm@R=3mm{ \ar[dr]^{\bar k} & & & &\\  & \circ \ar[r] &\circ \ar[r] \ar[u]_i& v \ar[ur]^j \ar[dr]&
\\  \ar[ur]& & & &}$ \end{tabular}
\text{ or }
\begin{tabular}{c}$\xymatrix@C=5mm@R=3mm{ \ar[dr]^{\bar k} & & & &\\  & \circ \ar[r] &\circ \ar[r]& v \ar[ur]^j \ar[dr]&
\\ \ar[ur]& &\ar[u]_{\bar i} & &}$ \end{tabular}
\end{equation*}
\end{enumerate}

\begin{lemma} \label{tree} Let $k\geq0$. Then
\begin{equation}\label{eqtree}
P(f_{/\beta_1\cdots\beta_k})=\sum_{T} (-1)^{|V(T)|-1}\, T,
\end{equation}
where $T$ runs over $\mathscr P_f(\beta_1\cdots\beta_k)$.
\end{lemma}
\begin{proof}
By \eqref{eqcur4}, we have the recursive formula
$$P(f_{/\beta_1\dots\beta_{k-1}\beta_k})=\partial_{\beta_k}P(f_{/\beta_1\dots\beta_{k-1}})-\sum_{i=1}^{k-1}
\Gamma_{\beta_k\beta_i}^{\delta}P(f_{/\beta_1\dots\beta_{i-1}\delta\beta_{i+1}\dots\beta_{k-1}}).$$
Note that Case (ii) of the half-edge action corresponds to the
Christoffel symbols. Then one can prove \eqref{eqtree} inductively
by observing that no two trees in $\mathscr
P_f(\beta_1\cdots\beta_k)$ are identical, i.e., the procedure of
generating trees in $\mathscr P_f(\beta_1\cdots\beta_k)$ can be
reversed.
\end{proof}
\begin{example}
By Lemma \ref{tree},
\begin{align*}
&P(f_{/i j\bar k \bar l})\\=&
\begin{tabular}{c}
\xymatrix@C=7mm@R=1mm{ &&
\\ \ar[r]^{\bar k\bar l}&\bullet \ar[r] ^{ij} & \\
&&}
\end{tabular}
-
\begin{tabular}{c}
\xymatrix@C=5mm@R=3mm{ &  &
\\ \bullet \ar[r]  & \circ
\ar[ur]^{i j} &
\\  &&\ar[ul]^{\bar k\bar l}}
\end{tabular}
-
\begin{tabular}{c}
 \xymatrix@C=5mm@R=3mm{ &  &
\\ \bullet \ar[r]  & \circ
\ar[ur]^{ij} &
\\ \ar[u]_{\bar k}&&\ar[ul]^{\bar l}}
\end{tabular}
-
\begin{tabular}{c}
\xymatrix@C=5mm@R=3mm{ &  &
\\ \bullet \ar[r]  & \circ
\ar[ur]^{ij} &
\\  \ar[u]_{\bar l}&&\ar[ul]^{\bar k}}
\end{tabular}\\
=& f_{i j\bar k \bar l}-f_{p}g_{i\bar p j\bar k\bar l}-f_{p\bar
k}g_{i\bar p j\bar l}- f_{p\bar l}g_{i\bar k j\bar p}.
\end{align*}
Here $f_{i j\bar k \bar l}=\partial_{i j\bar k \bar l} f$ denotes
the partial derivative of $f$.
\end{example}

\begin{proposition}\label{h3} Let $\square$ be the complex Laplacian defined by $\square f=f_{/j\bar j}$ for any function $f$. Then
\begin{equation}\label{eqh7}
\square^k=\sum_{\Gamma:\,w(\Gamma)=k}^{\text{strong stable}}(-1)^{|V(\Gamma)|-1}\frac{\varphi(\Gamma)}{|{\rm Aut}(\Gamma)|}\, \Gamma,
\end{equation}
where $\Gamma$ runs over all strongly connected stable pointed graphs of weight $k$.
\end{proposition}
\begin{proof}
First we introduce the operation of attaching a directed arc $uv$ to a graph $\Gamma$.
We have two ways of attaching the endpoints $u,v$: (i) attach them to any vertices of $\Gamma$, (ii) attach them
to any edges of $\Gamma$ by creating a new vertex at the attaching edge.

Given $k\geq0$, denote by $S_k$ the multiset of graphs obtained by
attaching $k$ edges consecutively to $\bullet$ in all possible ways.
Obviously all graphs in $S_k$ are strongly connected. By
\eqref{eqcur1}, \eqref{eqcur4} and an similar argument of Lemma
\ref{tree}, we can prove
$$\square^k=\sum_{\Gamma\in S_k}(-1)^{|V(\Gamma)|-1}\,\Gamma.$$
Here the power of $-1$ is due to the fact that attaching an endpoint
to an edge will produce a negative sign, i.e.,
$\partial_{\alpha}g^{i\bar j}=-g^{p\bar j}g^{i\bar q}g_{p\bar
q\alpha}$.

Given a pointed graph $\Gamma$ of weight $k$, it is not difficult to see that there is a natural map from the set of all strong reductions of $\Gamma$ (cf. Remark \ref{rm1}) to $S_k$ by reversing the reduction procedure. Two strong reductions map to the same image in $S_k$ if and only if they differ by an automorphism of $\Gamma$, which implies that $\Gamma$ occurs in $S_k$ exactly $\frac{\varphi(\Gamma)}{|{\rm Aut}(\Gamma)|}$ times. So we conclude the proof.
\end{proof}

\begin{remark}
Eq. \eqref{eqh7} is an expression of $\square^k$ as a differential operator on functions.
Note that \eqref{eqh7} holds only at the center of a normal coordinate system. By Lemma \ref{h8}, if we allow semistable graphs in the summation on the right side of
\eqref{eqh7}, then it holds in the entire coordinate neighborhood.
See \cite[\S 4]{Xu2} for a detailed discussion.
\end{remark}

\begin{definition}\label{df3}
If a pointed graph $\Gamma$ satisfies $\deg^+(\bullet)=\deg^-(\bullet)$ and has $l$ loops at $\bullet$,
we define $\mathfrak m(\Gamma)=\deg^+(\bullet)-l$. When we remove $\bullet$ and all loops at $\bullet$ from $\Gamma$,
then there are $\mathfrak m(\Gamma)$ inward and $\mathfrak m(\Gamma)$ outward loose ends; an inward loose end may be paired with an outward loose end to form an edge. Denote by $\mathscr P(\Gamma)$ the set of all $\mathfrak m(\Gamma)!$ possible complete pairings of these loose ends. Given a pairing $P\in\mathscr P(\Gamma)$, denote by $G_P$ the graph generated from the pairing $P$. Note that two different complete parings may produce isomorphic graphs.
\end{definition}

\begin{definition} \label{df2}
Given a graph $G$, we can arbitrary cut $m$ edges of $G$ and get $2m$ loose ends, then we can construct a pointed graph by connecting all these loose ends to a new vertex $\bullet$. Denote by $\mathscr C(G)$ the set of all $2^{|E(G)|}$ possible edge-cuttings of $G$. Given an edge-cutting $C\in\mathscr C(G)$, denote by $\mathfrak m(C)$ the number of edges being cut and $\Gamma_C$ the corresponding pointed graph obtained by connecting all loose ends to a new vertex $\bullet$.
\end{definition}

\begin{lemma}\label{h4}
Let $\Gamma$ be a pointed graph that satisfies
$\deg^+(\bullet)=\deg^-(\bullet)$ and has $l$ loops at $\bullet$.
Denote $m=\deg^+(\bullet)-l$. As a differential operator, $\Gamma$
satisfies
\begin{equation}
\Gamma\left[\frac{|z|^{2(l+m)}}{(l+m)!}\right]_{z=0}=\frac{(l+m+d-1)!}{(m+d-1)!}\sum_{P\in\mathscr
P(\Gamma)}G_P,
\end{equation}
where $|z|=\sqrt{|z_1|+\cdots+|z_d|}$.
\end{lemma}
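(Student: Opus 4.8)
The plan is to read $\Gamma$ as a differential operator and evaluate it at a point where normal coordinates are centered, so $g_{i\bar j}(0)=\delta_{ij}$ and $g^{i\bar j}(0)=\delta_{ij}$. Write $N=l+m=\deg^+(\bullet)=\deg^-(\bullet)$ and $F(z)=|z|^{2N}/N!$. Applying $\Gamma$ to $F$ and evaluating at $z=0$ factors into three pieces: each ordinary vertex $v\in V(\Gamma_-)$ contributes its metric-derivative monomial, which we keep symbolic; each edge contributes a Kronecker delta; and the vertex $\bullet$ contributes the scalar
$$\partial_{i_1}\cdots\partial_{i_N}\partial_{\bar j_1}\cdots\partial_{\bar j_N}F\big|_{0},$$
where $i_1,\dots,i_N$ index the outgoing (unbarred) half-edges and $j_1,\dots,j_N$ the incoming (barred) half-edges at $\bullet$, the $l$ loops at $\bullet$ being recorded by a fixed matching $\lambda$ that pairs $l$ of the $i$-slots with $l$ of the $j$-slots. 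Thus the whole quantity reduces to computing this $\bullet$-derivative and then carrying out the contractions dictated by $\lambda$, by the edge-deltas, and by the pairing of the remaining $2m$ free half-edges that assembles the multi-digraph $G_P$.

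First I would establish the identity
$$\partial_{i_1}\cdots\partial_{i_N}\partial_{\bar j_1}\cdots\partial_{\bar j_N}\frac{|z|^{2N}}{N!}\bigg|_{0}=\sum_{\pi\in S_N}\prod_{s=1}^{N}\delta_{j_s,\,i_{\pi(s)}}.$$
The argument is to expand $|z|^{2N}=\big(\sum_a z_a\bar z_a\big)^N=\sum_{\sigma\colon[N]\to[d]}\prod_{t=1}^{N}z_{\sigma(t)}\bar z_{\sigma(t)}$, differentiate term by term, and observe that the monomial attached to $\sigma$ survives at $0$ only through the $\#\{\tau\in S_N: i_t=\sigma(\tau(t))\}$ ways of matching the $i$-slots to $\sigma$, times the $\#\{\tau'\in S_N: j_s=\sigma(\tau'(s))\}$ ways of matching the $j$-slots to $\sigma$; summing over $\sigma$ and substituting $\pi=\tau\circ\tau'^{-1}$, so each admissible $\pi$ arises from exactly $N!$ pairs $(\tau,\tau')$, yields the right-hand side after cancelling the $1/N!$.

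Next I would substitute this identity into $\Gamma[F]|_0$, producing a sum over $\pi\in S_N$, and analyze the superposition of the two matchings $\lambda$ and $\pi$ on the $2N$ half-edges of $\bullet$. A free half-edge meets exactly one $\pi$-edge, while a loop half-edge meets one $\pi$-edge and one $\lambda$-edge, so the union of $\pi$ and $\lambda$ decomposes these half-edges into $m$ paths---each running from a free incoming half-edge of $\bullet$ to a free outgoing one, possibly threading through several loops---together with some cycles built from loops alone. Carrying out the contractions along $\pi$, $\lambda$ and the edge-deltas then amounts to deleting $\bullet$ and splicing each path into a single edge; the resulting multi-digraph depends only on the induced pairing $P\in\mathscr P(\Gamma)$ of the $m$ free incoming with the $m$ free outgoing half-edges, and each loop-only cycle contributes a scalar factor $\sum_i\delta_{ii}=d$. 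Hence $\Gamma[F]|_0=\sum_{\pi}d^{\,c(\pi)}G_{P(\pi)}$, where $c(\pi)$ counts the loop-cycles.

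Finally, for a fixed $P$ I would compute $\sum_{\pi:\,P(\pi)=P}d^{\,c(\pi)}$ by re-encoding such a $\pi$ as the choice, for each of the $m$ paths, of an ordered list of the loops it threads, together with a partition of the leftover loops into cyclically ordered cycles: if $p$ loops go into the paths the count is $\binom{l}{p}\frac{(m+p-1)!}{(m-1)!}$, and the remaining $l-p$ loops contribute $\sum_{\tau\in S_{l-p}}d^{c(\tau)}=d(d+1)\cdots(d+l-p-1)$. Summing over $p$ and invoking the Vandermonde convolution $\sum_{p}\binom{p+m-1}{m-1}\binom{l-p+d-1}{d-1}=\binom{l+m+d-1}{m+d-1}$ gives $\frac{(l+m+d-1)!}{(m+d-1)!}$, independently of $P$ (the degenerate case $m=0$, where there are no free half-edges, $G_P$ is empty, and $\pi$ ranges over all permutations of the $l$ loop half-edges, gives the same value $\frac{(l+d-1)!}{(d-1)!}$ directly); summing over $P\in\mathscr P(\Gamma)$ then finishes the proof. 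I expect the main obstacle to be the bookkeeping of the last two steps: verifying that smoothing out $\bullet$ depends only on $P$ and not on the internal routing through the loops, and getting the weighted count of $\pi$'s over each $P$---including the powers of $d$ coming from loop-cycles---exactly right.
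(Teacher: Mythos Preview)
Your argument is correct. The paper, however, takes a much shorter route. It simply asserts that a ``straightforward computation'' reduces the lemma to the stars-and-bars identity
$$\sum_{a_1+\cdots+a_d=l\atop a_i\geq0}\prod_{j=1}^d\binom{m_j+a_j}{m_j}=\binom{l+m+d-1}{m+d-1}\qquad(m_1+\cdots+m_d=m),$$
which it then proves bijectively. The implicit reduction is to observe that the $\bullet$-tensor (after summing out the $l$ loop indices) is symmetric in its $m$ holomorphic and $m$ antiholomorphic slots and $U(d)$-invariant, hence a scalar multiple of $\sum_P\prod_k\delta_{j_k,i_{P(k)}}$; evaluating both sides at a single index tuple $(i_k)=(j_k)$ with multiplicities $m_1,\ldots,m_d$ and expanding $|z|^{2N}/N!$ multinomially gives exactly the displayed identity, and the factor $l!\binom{l+m+d-1}{m+d-1}$ is the desired constant.

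Your approach is genuinely different: you expand the $\bullet$-derivative as $\sum_{\pi\in S_N}\prod_s\delta_{j_s,i_{\pi(s)}}$, overlay $\pi$ with the loop-matching $\lambda$ to obtain paths and loop-cycles, and then count the $\pi$'s over each pairing $P$ using the cycle identity $\sum_{\tau\in S_{l-p}}d^{c(\tau)}=d(d+1)\cdots(d+l-p-1)$ together with a Vandermonde convolution. This is longer but more structural: it avoids any appeal to invariant theory, shows combinatorially that the constant is independent of $P$, and interprets the rising factorial $(m+d)(m+d+1)\cdots(m+d+l-1)$ as counting routings of the $l$ loops through $m$ external paths and $d$ coordinate cycles. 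The two combinatorial identities invoked are of course the same generating-function fact, both amounting to extracting $[x^l](1-x)^{-(m+d)}$.
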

\begin{proof} By a straightforward computation, it is reduced to prove the following combinatorial identity:
for $m_1+\cdots+m_d=m$ with $m_i\geq0$,
$$\sum_{a_1+\cdots+a_d=l\atop a_i\geq0}\prod_{j=1}^d\binom{m_j+a_j}{m_j}=\binom{l+m+d-1}{m+d-1}.$$
The right-hand side has the following enumerative interpretation: in
a sequence of $l+m+d-1$ symbols, we choose $m+d-1$ objects and make
$d-1$ of them to be bars which are arranged as follows:
$$\underbrace{\star\dots\star}_{m_1}|\underbrace{\star\dots\star}_{m_2}|\cdots\cdots|\underbrace{\star\dots\star}_{m_d}$$
It is not difficult to see that all these arrangements are in
one-to-one correspondence with that of the left-hand side.
\end{proof}

\begin{lemma}\label{h5}
Let $\Gamma$ be a pointed graph that has no loops at the distinguished vertex $\bullet$. Then
\begin{equation}\label{eqh2}
\frac{1}{|{\rm Aut}(\Gamma)|}\sum_{P\in\mathscr P(\Gamma)} G_P=\sum_{G}\frac{\#\{C\in\mathscr C(G)\mid\Gamma_C\cong\Gamma\}}{|{\rm Aut}(G)|}\, G.
\end{equation}
\end{lemma}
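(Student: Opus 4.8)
The plan is to interpret both sides of \eqref{eqh2} as identities among formal linear combinations of (unpointed) graphs, and to exhibit a weight-preserving bijection between the indexing sets, keeping careful track of automorphism factors. Fix a pointed graph $\Gamma$ with no loops at $\bullet$, and set $m=\mathfrak m(\Gamma)$, so that removing $\bullet$ from $\Gamma$ leaves a graph $\Gamma_-$ together with $m$ inward and $m$ outward loose ends. A pairing $P\in\mathscr P(\Gamma)$ joins each outward loose end to an inward loose end, producing an unpointed graph $G_P$; conversely, any unpointed graph $G$ together with an edge-cutting $C\in\mathscr C(G)$ with $\Gamma_C\cong\Gamma$ records exactly such a reconstruction. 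So the content of the lemma is the double-counting identity: the number of ways to build a given $G$ by pairing loose ends of $\Gamma$, weighted by $1/|{\rm Aut}(\Gamma)|$, equals the number of cuttings of $G$ yielding $\Gamma$, weighted by $1/|{\rm Aut}(G)|$.

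The key step is an orbit-counting (Burnside-type) argument. First I would make the pairings ``rigid'' by labelling the $m$ outward and $m$ inward loose ends of $\Gamma$ with $1,\dots,m$; then $\mathscr P(\Gamma)$ has exactly $m!$ elements, one for each bijection between the two labelled sets. Consider the set $\mathscr B$ of triples (a labelled copy of $\Gamma$, a pairing $P$), modulo relabelling; the group ${\rm Aut}(\Gamma)$ acts on the labellings, and I would show that the multiset $\{G_P : P\in\mathscr P(\Gamma)\}/{\rm Aut}(\Gamma)$ — that is, the left-hand side of \eqref{eqh2} read coefficient-wise — assigns to an isomorphism class $G$ the rational number $\#\{P : G_P\cong G\}/|{\rm Aut}(\Gamma)|$. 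On the other side, each $C\in\mathscr C(G)$ with $\Gamma_C\cong\Gamma$ determines, after choosing an isomorphism $\Gamma_C\xrightarrow{\sim}\Gamma$, a labelling of the cut edges of $G$ and hence a pairing realizing $G$ from $\Gamma$; the ambiguity in the chosen isomorphism is precisely an element of ${\rm Aut}(\Gamma)$, and the ambiguity in which automorphic copy of the cutting we started from is precisely an element of ${\rm Aut}(G)$. Matching these two fibrations of the common set of ``labelled reconstructions'' gives
\[
\frac{\#\{P\in\mathscr P(\Gamma): G_P\cong G\}}{|{\rm Aut}(\Gamma)|}
=\frac{\#\{C\in\mathscr C(G):\Gamma_C\cong\Gamma\}}{|{\rm Aut}(G)|},
\]
which is exactly the coefficient identity equating the two sides of \eqref{eqh2}.

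The hypothesis that $\Gamma$ has no loops at $\bullet$ is what guarantees that every loose end of $\Gamma_-$ is genuinely paired with a \emph{distinct} partner of opposite orientation, so that the cutting $C$ recovered from $(G,P)$ really has $\mathfrak m(C)=m$ and $\Gamma_C\cong\Gamma$ with no collapsing; without it one would also have to account for loops at $\bullet$ surviving uncut, and the bijection would need modification. I expect the main obstacle to be bookkeeping the automorphisms correctly: an automorphism of $\Gamma$ need not fix the labels on the loose ends, and an automorphism of $G$ need not preserve a given cutting, so one must be precise about which group acts on which labelled object. The cleanest formulation is to run the orbit count on the set of \emph{fully labelled} reconstructions — label $V(\Gamma_-)$, the edges of $\Gamma_-$, and the $2m$ loose ends — and observe that ${\rm Aut}(\Gamma)$ acts freely on the set of such labellings inducing a fixed abstract $\Gamma$, while ${\rm Aut}(G)$ acts freely on the labellings inducing a fixed abstract $G$; dividing the common cardinality by $|{\rm Aut}(\Gamma)|$ on one side and $|{\rm Aut}(G)|$ on the other yields the claim. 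No multiplicities are lost even when distinct pairings give isomorphic $G_P$, since the left-hand side of \eqref{eqh2} is by convention a sum over pairings (as in Definition~\ref{df3}), not over isomorphism classes of the resulting graphs.
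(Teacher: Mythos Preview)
Your proposal is correct and follows essentially the same orbit-counting strategy as the paper: both arguments reduce the lemma to the coefficient identity
\[
\frac{\#\{P\in\mathscr P(\Gamma): G_P\cong G\}}{|{\rm Aut}(\Gamma)|}
=\frac{\#\{C\in\mathscr C(G):\Gamma_C\cong\Gamma\}}{|{\rm Aut}(G)|}
\]
and establish it by a Burnside/orbit-stabilizer count. The only cosmetic difference is that the paper phrases the count via the action of ${\rm Aut}(\Gamma)$ on $\mathscr P(\Gamma)$, identifies the stabilizer of a pairing $P$ with the subgroup ${\rm Aut}(G)'\subset{\rm Aut}(G)$ that preserves the set $E_P$ of paired edges, and observes that the cuttings with $\Gamma_C\cong\Gamma$ are exactly the ${\rm Aut}(G)/{\rm Aut}(G)'$-cosets; you instead pass to a common set of fully labelled reconstructions on which both ${\rm Aut}(\Gamma)$ and ${\rm Aut}(G)$ act freely, which is an equivalent packaging of the same count.
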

\begin{proof}
The group ${\rm Aut}(\Gamma)$ has a natural action on the set $\mathscr P(\Gamma)$ by permuting the loose ends (and thus the pairings) resulting from removing $\bullet$. It is not difficult to see that the set of orbits corresponds to isomorphism classes of graphs generated from the parings, i.e., $\{G_P\mid P\in\mathscr P(\Gamma)\}$. Given a pairing $P\in\mathscr P(\Gamma)$ and $G=G_P$, denote by $E_P$ the set of edges in $G$ resulting from the pairing. Then the isotropy group at $P$ of the above action is ${\rm Aut}(G)'$, which is the subgroup of ${\rm Aut}(G)$ that leaves $E_P$ invariant. Note that ${\rm Aut}(G)'$ is determined up to conjugacy in ${\rm Aut}(G)$ for any $P\in\mathscr P(\Gamma)$ satisfying $G_P\cong G$.  Note also that the set $\{C\in\mathscr C(G)\mid\Gamma_C\cong\Gamma\}$ is in one-to-one correspondence with the coset of ${\rm Aut}(G)'$ in ${\rm Aut}(G)$. Therefore
$$\sum_{P\in\mathscr P(\Gamma)}\, G_P=\sum_{G}\frac{|{\rm Aut}(\Gamma)|}{|{\rm Aut}(G)'|}\, G=|{\rm Aut}(\Gamma)|\sum_{G}\frac{\#\{C\in\mathscr C(G)\mid\Gamma_C\cong\Gamma\}}{|{\rm Aut}(G)|}\, G,$$
as claimed.
\end{proof}
\begin{remark}
\eqref{eqh2} can be equivalently written as
\begin{equation}
\frac{\#\{P\in\mathscr P(\Gamma)\mid G_P\cong G\}}{|{\rm Aut}(\Gamma)|}=\frac{\#\{C\in\mathscr C(G)\mid\Gamma_C\cong\Gamma\}}{|{\rm Aut}(G)|}
\end{equation}
for any graph $G$ and pointed graph $\Gamma$.
\end{remark}

Before we prove our main result, we need a combinatorial lemma.
\begin{lemma}\label{h6} Let $w\geq m\geq1$ and $d\geq0$. Then
\begin{equation} \label{eqh1}
\sum_{j=m}^w (-1)^j\binom{w+d}{j+d}\binom{j+d-1}{m+d-1}=(-1)^m.
\end{equation}
\end{lemma}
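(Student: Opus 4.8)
The plan is to recognize the left-hand side as the evaluation of a finite difference of a polynomial, or equivalently as a coefficient extraction in a generating function, and to telescope. First I would rewrite the summand using the identity $\binom{w+d}{j+d}\binom{j+d-1}{m+d-1}=\binom{w+d}{m+d-1}\binom{w-m+1}{j-m+1}$ — no wait, the cleaner route is the ``subset of a subset'' rearrangement $\binom{w+d}{j+d}\binom{j+d-1}{m+d-1}$: here one does not have a clean trinomial revision because the lower binomial has top $j+d-1$, not $j+d$. So instead I would substitute $i=j-m\ge 0$ and treat the sum as $\sum_{i=0}^{w-m}(-1)^{i+m}\binom{w+d}{i+m+d}\binom{i+m+d-1}{m+d-1}$, and observe that $\binom{i+m+d-1}{m+d-1}=\binom{i+m+d-1}{i}$ counts something while $\binom{w+d}{i+m+d}$ is a polynomial in nothing helpful, so the real tool is the absorption/negation identity $\binom{i+m+d-1}{i}=(-1)^i\binom{-(m+d)}{i}$.

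With that, the sum becomes $(-1)^m\sum_{i\ge 0}\binom{w+d}{i+m+d}\binom{-(m+d)}{i}$, where the range $0\le i\le w-m$ is automatically enforced because $\binom{w+d}{i+m+d}$ vanishes for $i>w-m$. Now I would interpret this as a Vandermonde-type convolution: $\sum_i \binom{w+d}{(m+d)+i}\binom{-(m+d)}{-i}$ — rewriting $\binom{-(m+d)}{i}=\binom{-(m+d)}{-(m+d)-i}$ is not standard, so the honest move is the Vandermonde identity in the form $\sum_{i}\binom{a}{p+i}\binom{b}{q-i}=\binom{a+b}{p+q}$ applied with $a=w+d$, $b=-(m+d)$, and the shift arranged so that $p+q=0$. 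Concretely, $\binom{-(m+d)}{i}=\binom{-(m+d)}{i}$ pairs with $\binom{w+d}{(m+d)+i}$ and summing over $i\in\mathbb Z$ (the extra terms being zero) gives, by Vandermonde, $\binom{(w+d)+(-(m+d))}{(m+d)+0}=\binom{w-m}{m+d}$. That is not $1$ in general, so this pairing is off; the correct pairing must make the bottom arguments sum to the target. I would therefore instead write $\binom{w+d}{i+m+d}=\binom{w+d}{(w+d)-(i+m+d)}=\binom{w+d}{w-m-i}$ and pair it with $\binom{-(m+d)}{i}$: Vandermonde then yields $\binom{w+d-(m+d)}{w-m}=\binom{w-m}{w-m}=1$. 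Multiplying by the prefactor $(-1)^m$ gives exactly the claimed $(-1)^m$.

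The step I expect to be the main obstacle is bookkeeping the index ranges and the negation-of-upper-index conversions so that the Vandermonde convolution applies to a genuinely unrestricted sum over $\mathbb Z$; one must check that every term outside $m\le j\le w$ really does vanish (the lower binomial $\binom{j+d-1}{m+d-1}$ handles $j<m$ once $d\ge 1$, and the degenerate case $d=0$ needs a separate glance since then $\binom{j-1}{m-1}$ still vanishes for $0\le j<m$, while the $j=0$ term contributes only when $m=0$, excluded by hypothesis). An alternative, perhaps cleaner for exposition, is induction on $w$: the case $w=m$ reads $(-1)^m\binom{m+d}{m+d}\binom{m+d-1}{m+d-1}=(-1)^m$, and the inductive step follows from Pascal's rule $\binom{w+1+d}{j+d}=\binom{w+d}{j+d}+\binom{w+d}{j+d-1}$ after re-indexing the second sum, with the boundary term at $j=w+1$ cancelling against the telescoped remainder. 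I would present whichever is shorter, likely the Vandermonde argument, relegating the range check to a parenthetical remark.
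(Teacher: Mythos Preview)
Your Vandermonde argument is correct once you reach the second attempt: after the substitution $i=j-m$ and the upper-negation $\binom{i+m+d-1}{i}=(-1)^i\binom{-(m+d)}{i}$, the symmetry $\binom{w+d}{i+m+d}=\binom{w+d}{w-m-i}$ turns the sum into $(-1)^m\sum_i\binom{-(m+d)}{i}\binom{w+d}{(w-m)-i}=(-1)^m\binom{w-m}{w-m}=(-1)^m$, and the range check you sketch is fine since $w+d\ge 0$ forces the first factor to vanish for $i>w-m$ while $\binom{-(m+d)}{i}=0$ for $i<0$. This is a genuinely different route from the paper, which proceeds exactly by your proposed alternative: induction on $w$ via Pascal's rule $\binom{w+1+d}{j+d}=\binom{w+d}{j+d}+\binom{w+d}{j+d-1}$, the second piece collapsing because $\sum_{j=m}^{w+1}(-1)^j\binom{w+1-m}{w+1-j}=0$. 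Your Vandermonde approach is one-shot and conceptually tidy, at the cost of needing the polynomial extension of Chu--Vandermonde to negative upper arguments; the paper's induction is more elementary (only Pascal and a binomial alternating sum) and avoids that subtlety, though it requires verifying the collapse of the extra sum. Either is short enough for the paper; if you keep the Vandermonde version, prune the false starts and state the identity you invoke explicitly.
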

\begin{proof}
Denote the left-hand side by $f(m,w)$. Obviously $f(m,m)=(-1)^m$. Using
$$\binom{w+d+1}{j+d}=\binom{w+d}{j+d}+\binom{w+d}{j+d-1},$$
it is not difficult to get
\begin{align*}
f(m,w+1)&=f(m,w)+\frac{(w+d)!}{(m+d-1)!(w+1-m)!}\sum_{j=m}^{w+1}(-1)^j\binom{w+1-m}{w+1-j}\\
&=f(m,w).
\end{align*}
Therefore \eqref{eqh1} follows by induction.
\end{proof}

\begin{theorem}\label{main}
On a $d$-dimensional K\"ahler manifold $M$, the heat kernel
coefficients of the (real) Laplacian is given by
\begin{equation}\label{eqh3}
a_n(z)=\sum_{G:\,w(G)=n}^{\text{stable}}\frac{(-1)^{|V(G)|}2^n}{|{\rm
Aut}(G)|}\sum_{C\in\mathscr C(G)}\frac{(-1)^{\mathfrak
m(C)}\varphi(\Gamma_C)}{(\mathfrak m(C)+n)!} \,G,
\end{equation}
where $G$ runs over all stable graphs of weight $n$ and $C$ runs over all edge-cuttings of $G$ (cf. Definition \ref{df2}).
\end{theorem}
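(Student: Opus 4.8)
The plan is to start from Polterovich's formula \eqref{eqpol}, use Remark \ref{rm2} to replace the squared distance function by its Euclidean model $2|z|^2$ in Kähler normal coordinates, and then feed the powers of the complex Laplacian through Proposition \ref{h3}. More precisely, on a Kähler manifold the real Laplacian $\Delta$ acting on functions equals $2\square$, so $\Delta^{j+n}(dist(y,x)^{2j})|_{y=x} = 2^{j+n}\square^{j+n}\bigl((2|z|^2)^j\bigr)|_{z=0} = 2^{2j+n}\square^{j+n}(|z|^{2j})|_{z=0}$. Substituting Proposition \ref{h3} for $\square^{j+n}$, each term becomes a sum over strongly connected stable pointed graphs $\Gamma$ of weight $j+n$, with coefficient $(-1)^{|V(\Gamma)|-1}\varphi(\Gamma)/|{\rm Aut}(\Gamma)|$, applied to $|z|^{2j}$ at the origin.

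The next step is to evaluate $\Gamma[|z|^{2j}]|_{z=0}$. A differential operator coming from a pointed graph $\Gamma$ gives a nonzero contribution at the origin only when the number of $\partial$'s matches the number of $\bar\partial$'s hitting each $|z_i|^2$ factor, which forces $\deg^+(\bullet)=\deg^-(\bullet)$; writing $l$ for the number of loops at $\bullet$ and $m=\mathfrak m(\Gamma)=\deg^+(\bullet)-l$, Lemma \ref{h4} (with the dimension bookkeeping absorbed into the ratio of factorials) converts this into $\frac{(l+m+d-1)!}{(m+d-1)!}\sum_{P\in\mathscr P(\Gamma)}G_P$ up to the normalization $|z|^{2j}/j!$, where $j=l+m$. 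Then Lemma \ref{h5} rewrites $\frac{1}{|{\rm Aut}(\Gamma)|}\sum_{P}G_P$ as a sum over ordinary graphs $G$ with coefficient $\#\{C\in\mathscr C(G):\Gamma_C\cong\Gamma\}/|{\rm Aut}(G)|$. Reorganizing the whole triple sum — over $j$, over $\Gamma$, over $G$ — by summing first over the output graph $G$ and its edge-cuttings $C$ (so that $\Gamma=\Gamma_C$, $w(G)=w(\Gamma)-\mathfrak m(C)$ is forced to equal $n$, $\mathfrak m(C)=m$, and $j$ ranges from $m$ to $w$), the combinatorial factors collapse: the $\varphi(\Gamma_C)$, the sign $(-1)^{|V(G)|}$ (note $|V(\Gamma_C)|=|V(G)|+1$), and the factor $2^n$ separate out, leaving an inner sum over $j$ of the form $\sum_{j=m}^w(-1)^j\binom{w+d/2}{j+d/2}\binom{j+d/2-1}{m+d/2-1}$.

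By Lemma \ref{h6} (applied with the half-integer parameter $d/2$ in place of $d$; the proof there goes through verbatim since it only uses Pascal's identity) this inner sum equals $(-1)^m = (-1)^{\mathfrak m(C)}$, and the remaining factorial $\frac{1}{4^jj!(j+n)!}\cdot 2^{2j+n}\cdot\frac{(l+m+d-1)!}{(m+d-1)!}$ with $j=l+m$ must be checked to reduce exactly to $\frac{2^n}{(\mathfrak m(C)+n)!}$ after the binomial reindexing is carried out — this accounting, together with the fact that the $d$-dependence cancels (confirming the dimension-independence of $a_n$ that the introduction flags as non-obvious from \eqref{eqpol}), is the computational heart of the argument. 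The main obstacle I anticipate is precisely this bookkeeping: tracking how the automorphism groups $|{\rm Aut}(\Gamma_C)|$ versus $|{\rm Aut}(G)|$, the vertex counts, and the three nested summation ranges interact so that Lemma \ref{h5} and Lemma \ref{h6} can be applied cleanly. One must also verify that only stable $G$ survive — a stable $G$ produces a stable $\Gamma_C$ for any cutting, while conversely any stable $\Gamma$ of weight $j+n$ arising in Proposition \ref{h3} either has $\deg^+(\bullet)\neq\deg^-(\bullet)$ (contributing zero at the origin) or is $\Gamma_C$ for some stable $G$ of weight $n$ — and that the range $w\geq 3n$ (or $w\geq n$ by Weingart, Remark \ref{rm2}) is exactly what Lemma \ref{h6} requires, namely $w\geq m$ for all relevant $m\leq w(\Gamma)\leq 3n$. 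Finally I would record the multiplicativity of $z(G)$ over connected components, which follows formally from the product structure of ${\rm Aut}$, $\mathscr C$, and $\varphi$ on disjoint unions.
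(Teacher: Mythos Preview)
Your plan is exactly the paper's: Polterovich's formula in the coordinates of Remark~\ref{rm2}, then Proposition~\ref{h3}, Lemma~\ref{h4}, Lemma~\ref{h5}, and Lemma~\ref{h6} in that order. Two small corrections: in Theorem~\ref{main} the symbol $d$ is the \emph{complex} dimension, so Polterovich's $d/2$ becomes the integer $d$ and Lemma~\ref{h6} applies as stated (no half-integer extension needed); and the relation is $\Delta=-2\square$, which is where the crucial $(-1)^j$ in your inner sum actually comes from.

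The one substantive step you skip is the handling of loops at $\bullet$. Lemma~\ref{h5} is stated only for pointed graphs \emph{without} loops at the distinguished vertex, so you cannot apply it directly to $\Gamma$. The paper first passes from $\Gamma$ (with $l$ loops at $\bullet$) to $\Gamma'$ (loops removed) via the identities
\[
\varphi(\Gamma)=\frac{(j+n)!}{(m+n)!}\,\varphi(\Gamma'),\qquad |{\rm Aut}(\Gamma)|=l!\,|{\rm Aut}(\Gamma')|,
\]
which combine with the factor $\dfrac{(j+d-1)!}{(m+d-1)!}$ from Lemma~\ref{h4} and the $\dfrac{1}{(j+n)!}$ to produce exactly
\[
\binom{j+d-1}{m+d-1}\cdot\frac{1}{(m+n)!}\cdot\frac{\varphi(\Gamma')}{|{\rm Aut}(\Gamma')|}.
\]
Only after this can Lemma~\ref{h5} be applied to $\Gamma'$; and for fixed $(G,C)$ with $\mathfrak m(C)=m$ and $\Gamma_C\cong\Gamma'$, the remaining sum over $l\ge 0$ is precisely the sum over $j$ from $m$ to $w$ in Lemma~\ref{h6}. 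This is also where the correct range enters: one needs $w\ge |E(G)|$, and a stable $G$ of weight $n$ has $|E(G)|\le 2n$ (from $|E|\ge 2|V|$ and $|E|-|V|=n$), so the paper takes $w\ge 2n$.
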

\begin{proof}
First note that the real Laplacian $\Delta$ and complex Laplacian
$\square$ are related by $\square=-2\Delta$. By applying Proposition
\ref{h3} and Lemma \ref{h4} to Polterovich's formula \eqref{eqpol2},
we make the following calculation. Note that we take $w\geq 2n$
below.
\begin{align*}
a_n(z)&=(-1)^n\sum_{j=0}^w\binom{w+d}{j+d}\frac{1}{4^j j!(j+n)!}(-2\square)^{j+n}\big[(2|z|^2)^{j}\big]_{z=0}\\
&=2^n\sum_{j=0}^w\binom{w+d}{j+d}\frac{(-1)^j}{(j+n)!}\square^{j+n}\left[\frac{|z|^{2j}}{j!}\right]_{z=0}\\
&=2^n\sum_{j=0}^w\binom{w+d}{j+d}\frac{(-1)^j}{(j+n)!}\sum_{\Gamma:\,w(\Gamma)=j+n}^{\text{strong stable}}\frac{(-1)^{|V(\Gamma)|-1}\varphi(\Gamma)}{{\rm Aut}(\Gamma)}\, \Gamma\left[\frac{|z|^{2j}}{j!}\right]_{z=0}\\
&=2^n\sum_{j=0}^w\binom{w+d}{j+d}\frac{(-1)^j}{(j+n)!}\\
&\qquad\qquad\times\sum_{\Gamma:\,w(\Gamma)=j+n\atop \deg^+(\bullet)=\deg^-(\bullet)=j}^{\text{strong stable}}\!\!\!\frac{(j+d-1)!}{(\mathfrak m(\Gamma)+d-1)!}\frac{(-1)^{|V(\Gamma)|-1}\varphi(\Gamma)}{{\rm Aut}(\Gamma)}\sum_{P\in\mathscr P(\Gamma)}G_P.
\end{align*}
If $\Gamma$ has $l$ loops at $\bullet$ and $w(\Gamma)=j+n$, denote by $\Gamma'$ the graph obtained from $\Gamma$ by removing all loops at $\bullet$. It is not difficult to see (cf. Definition \ref{df1}) that $\varphi(\Gamma)=\binom{j+n}{l}\varphi(\Gamma')$ and ${\rm Aut}(\Gamma)=l!\cdot{\rm Aut}(\Gamma')$. Together with Lemma \ref{h5} and Lemma \ref{h6}, we get
\begin{align*}
a_n(z)&=2^n\sum_{j=0}^w\binom{w+d}{j+d}(-1)^j\\
&\qquad\qquad\times\sum_{\Gamma:\,w(\Gamma)=j+n\atop \deg^+(\bullet)=\deg^-(\bullet)=j}^{\text{strong stable}}
\binom{j+d-1}{\mathfrak m(\Gamma)+d-1}\frac{(-1)^{|V(\Gamma)|-1}}{(\mathfrak m(\Gamma)+n)!}\frac{\varphi(\Gamma')}{{\rm Aut}(\Gamma')}\sum_{P\in\mathscr P(\Gamma)}G_P\\
&=2^n\sum_{G:\,w(G)=n}^{\text{stable}}\;\sum_{m=1}^w\sum_{j=m}^w (-1)^j\binom{w+d}{j+d}\binom{j+d-1}{m+d-1}\frac{(-1)^{|V(G)|}}{(m+n)!}\\
&\qquad\qquad\qquad\qquad\qquad\qquad\qquad\qquad\qquad\qquad\qquad\times\sum_{C\in\mathscr C(G)\atop \mathfrak m(C)=m}\frac{\varphi(\Gamma_C)}{|{\rm Aut}(G)|} \,G\\
&=\sum_{G:\,w(G)=n}^{\text{stable}}\frac{(-1)^{|V(G)|}2^n}{|{\rm Aut}(G)|}\sum_{m=1}^{|E(G)|}\frac{(-1)^m}{(m+n)!}\sum_{C\in\mathscr C(G)\atop \mathfrak m(C)=m}\varphi(\Gamma_C) \,G\\
&=\sum_{G:\,w(G)=n}^{\text{stable}}\frac{(-1)^{|V(G)|}2^n}{|{\rm Aut}(G)|}\sum_{C\in\mathscr C(G)}\frac{(-1)^{\mathfrak m(C)}\varphi(\Gamma_C)}{(\mathfrak m(C)+n)!} \,G.
\end{align*}
The second to last equation used the crucial fact that $w\geq 2n\geq |E(G)|$.
\end{proof}

Denote by $z(G)$ the coefficient of $G$ in \eqref{eqh3},
$$z(G)=\frac{(-1)^{|V(G)|}2^{w(G)}}{|{\rm Aut}(G)|}\sum_{C\in\mathscr C(G)}\frac{(-1)^{\mathfrak m(C)}\varphi(\Gamma_C)}{(\mathfrak m(C)+w(G))!}.$$

\begin{proposition}\label{main2}
If $G$ is a disjoint
union of connected subgraphs $G=\cup_{i=1}^k G_i$, then we have
\begin{equation}\label{eqh4}
z(G)=\prod_{j=1}^k z(G_j)/|Sym(G_1,\dots,G_k)|,
\end{equation}
where $Sym(G_1,\dots,G_k)$ is the permutation group of the
connected subgraphs.
\end{proposition}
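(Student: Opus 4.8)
The plan is to prove the multiplicativity of $z(G)$ by tracking how the three ingredients of the formula — the sign $(-1)^{|V|}$, the automorphism group, and the inner sum over edge-cuttings weighted by $\varphi(\Gamma_C)$ — behave under disjoint union. Since $|V(G)| = \sum_i |V(G_i)|$, $w(G) = \sum_i w(G_i)$, and $|E(G)| = \sum_i |E(G_i)|$, the sign factor and the overall power of $2$ are manifestly multiplicative, so the content of the statement lies in the interaction between the automorphism factor and the edge-cutting sum.

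First I would observe that an edge-cutting $C \in \mathscr{C}(G)$ of $G = \sqcup_i G_i$ is the same data as a tuple $(C_1,\dots,C_k)$ with $C_i \in \mathscr{C}(G_i)$, and that $\mathfrak{m}(C) = \sum_i \mathfrak{m}(C_i)$. The key structural point is that $\Gamma_C$, obtained by attaching all $2\mathfrak{m}(C)$ loose ends to a single new vertex $\bullet$, is the "wedge at $\bullet$" of the pointed graphs $\Gamma_{C_i}$; and since $\varphi$ vanishes on non-strongly-connected pointed graphs (Definition \ref{df1}(i)), $\varphi(\Gamma_C) = 0$ unless \emph{every} $\Gamma_{C_i}$ is strongly connected. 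Moreover, when each $\Gamma_{C_i}$ is strongly connected, a strong reduction of $\Gamma_C$ in the sense of Remark \ref{rm1} consists of interleaving strong reductions of the individual $\Gamma_{C_i}$ (redundant edges and smoothings in distinct components never interact), so the number of strong reductions satisfies $\varphi(\Gamma_C) = \binom{\mathfrak{m}(C)}{\mathfrak{m}(C_1),\dots,\mathfrak{m}(C_k)}\prod_i \varphi(\Gamma_{C_i})$ — the multinomial coefficient records the choice of order in which the $w(G_i)$ edges of each component are removed, using that a strong reduction of $\Gamma_{C_i}$ removes exactly $w(G_i)$ edges. Combining this with $\binom{\mathfrak{m}(C)}{\mathfrak{m}(C_1),\dots,\mathfrak{m}(C_k)}/(\mathfrak{m}(C)+w(G))! = \prod_i 1/(\mathfrak{m}(C_i)+w(G_i))!$ — which follows from $w(G) = \sum w(G_i)$ and the definition of the multinomial coefficient — shows that the inner sum $\sum_{C}(-1)^{\mathfrak{m}(C)}\varphi(\Gamma_C)/(\mathfrak{m}(C)+w(G))!$ factors as the product over $i$ of the corresponding sums for $G_i$, up to the automorphism bookkeeping.

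The remaining point is the automorphism factor. In general $|\mathrm{Aut}(G)| \neq \prod_i |\mathrm{Aut}(G_i)|$: grouping the $G_i$ into isomorphism classes, if $G$ has $n_\lambda$ copies of the isomorphism type $\lambda$ then $|\mathrm{Aut}(G)| = \prod_\lambda n_\lambda! \,|\mathrm{Aut}(G_\lambda)|^{n_\lambda} = |Sym(G_1,\dots,G_k)| \cdot \prod_i |\mathrm{Aut}(G_i)|$, where $Sym(G_1,\dots,G_k)$ is precisely the group permuting isomorphic components. Substituting this identity together with the factorization of the inner sum into the definition of $z(G)$ yields $z(G) = \bigl(\prod_i z(G_i)\bigr)/|Sym(G_1,\dots,G_k)|$, which is \eqref{eqh4}.

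I expect the main obstacle to be the clean justification of the multinomial identity $\varphi(\Gamma_C) = \binom{\mathfrak{m}(C)}{\mathfrak{m}(C_1),\dots,\mathfrak{m}(C_k)}\prod_i \varphi(\Gamma_{C_i})$ — specifically, verifying that strong reductions of $\Gamma_C$ are exactly shuffles of component-wise strong reductions. This requires checking that an edge redundant in $\Gamma_{C_i}$ remains redundant in $\Gamma_C$ (and conversely that any redundant edge of $\Gamma_C$ lies in a unique component and is redundant there), which is immediate since adding or removing an edge in component $i$ cannot affect strong connectivity within component $j \neq i$ as all such components share only the vertex $\bullet$; one must also confirm the edge count $w(G_i)$ per component (Remark \ref{rm1}) so that the multinomial coefficient indeed counts orderings of $\sum_i w(G_i) = w(G)$ removal steps. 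One should also handle the degenerate but harmless case where some $\Gamma_{C_i}$ has only the vertex $\bullet$ with loops, which is covered by Definition \ref{df1}(ii). Once these bookkeeping facts are in place, the rest is the elementary algebra of multinomial coefficients and factorials indicated above.
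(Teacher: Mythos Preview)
Your overall strategy matches the paper's: factor $\mathscr C(G)$ as a product, show that the inner sum $\tilde z(G):=\sum_{C}(-1)^{\mathfrak m(C)}\varphi(\Gamma_C)/(\mathfrak m(C)+w(G))!$ is multiplicative via a shuffle count for $\varphi$, and absorb the discrepancy $|{\rm Aut}(G)|=|Sym(G_1,\dots,G_k)|\prod_i|{\rm Aut}(G_i)|$ into the symmetry factor. The paper does exactly this (for $k=2$, then by induction).

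However, your multinomial identity for $\varphi$ is wrong, and the subsequent algebraic simplification is correspondingly false. By Remark~\ref{rm1} a strong reduction of a pointed graph $\Gamma$ removes exactly $w(\Gamma)$ edges, and for $\Gamma_{C_i}$ one has $w(\Gamma_{C_i})=|E(G_i)|+\mathfrak m(C_i)-|V(G_i)|=w(G_i)+\mathfrak m(C_i)$, not $w(G_i)$. Hence the shuffle of component reductions contributes the multinomial
\[
\varphi(\Gamma_C)=\binom{\mathfrak m(C)+w(G)}{\mathfrak m(C_1)+w(G_1),\dots,\mathfrak m(C_k)+w(G_k)}\prod_{i=1}^k\varphi(\Gamma_{C_i}),
\]
which is precisely the paper's binomial $\binom{\mathfrak m(C_1)+\mathfrak m(C_2)+w(G_1)+w(G_2)}{\mathfrak m(C_1)+w(G_1)}$ in the case $k=2$. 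With this correction the cancellation you want actually occurs:
\[
\frac{1}{(\mathfrak m(C)+w(G))!}\binom{\mathfrak m(C)+w(G)}{\mathfrak m(C_1)+w(G_1),\dots,\mathfrak m(C_k)+w(G_k)}=\prod_{i=1}^k\frac{1}{(\mathfrak m(C_i)+w(G_i))!},
\]
giving $\tilde z(G)=\prod_i\tilde z(G_i)$. Your stated identity $\binom{\mathfrak m(C)}{\mathfrak m(C_1),\dots,\mathfrak m(C_k)}/(\mathfrak m(C)+w(G))!=\prod_i 1/(\mathfrak m(C_i)+w(G_i))!$ is simply false (try $k=2$, $w(G_i)=\mathfrak m(C_i)=1$). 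Once you fix the edge count $w(\Gamma_{C_i})$ and the multinomial accordingly, the rest of your argument goes through and coincides with the paper's.
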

\begin{proof}
For any two graphs $G_1,G_2$, we have $\mathscr C(G_1\cup G_2)=\mathscr C(G_1)\times \mathscr C(G_2)$. Let
$$\tilde z(G)=\sum_{C\in\mathscr C(G)}\frac{(-1)^{\mathfrak m(C)}\varphi(\Gamma_C)}{(\mathfrak m(C)+w(G))!}.$$
If $C_1\in \mathscr C(G_1)$
and $C_2\in \mathscr C(G_1)$, then
$$\varphi(\Gamma_{C_1\times C_2})=\binom{\mathfrak m(C_1)+\mathfrak m(C_2)+w(G_1)+w(G_2)}{\mathfrak m(C_1)+w(G_1)}.$$
Now it is easy to see that $\tilde z(G_1\cup G_2)=\tilde z(G_1)\tilde z(G_2)$, which implies \eqref{eqh4}.
\end{proof}

\vskip 30pt
\section{Computations of $a_1$ and $a_2$} \label{secgraph}

On a K\"ahler manifold, we denote by $\mathcal R_{ijkl},\mathcal
Ric_{ij},\mathcal P$ the Riemannian curvature tensor, Ricci
curvature tensor and scalar curvature respectively, in the sense of
Riemmanian geometry, and denote by $R_{i\bar j k\bar l},Ric_{i\bar
j},\rho$ the corresponding K\"ahlerian curvature tensors. The
following lemma is well-known to experts. For completeness, we give
a proof in the appendix.
\begin{lemma}\label{h7}
On a K\"ahler manifold, we have the identities $|\mathcal R|^2=4|R|^2$, $|\mathcal Ric|^2=2|Ric|^2$ and $\mathcal P=2\rho$.
\end{lemma}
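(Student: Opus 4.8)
The plan is to reduce all three identities to a pointwise linear-algebra computation and to carry it out in a complexified frame adapted to the K\"ahler structure. Fix a point $p$ and choose K\"ahler normal coordinates $z=(z_1,\dots,z_d)$ so that $g_{i\bar j}(p)=\delta_{ij}$ and the first derivatives of the metric vanish at $p$. Writing the underlying real coordinates as $z_j=x_j+\sqrt{-1}\,y_j$, we have $\partial_{x_j}=\partial_{z_j}+\partial_{\bar z_j}$ and $\partial_{y_j}=\sqrt{-1}(\partial_{z_j}-\partial_{\bar z_j})$. Extending the Riemannian metric complex-bilinearly, the K\"ahler condition gives $g(\partial_{z_i},\partial_{z_j})=g(\partial_{\bar z_i},\partial_{\bar z_j})=0$ and $g(\partial_{z_i},\partial_{\bar z_j})=g_{i\bar j}$, so that $g(\partial_{x_i},\partial_{x_i})=g(\partial_{y_i},\partial_{y_i})=2$ at $p$; hence $\{X_i,Y_i\}=\{\tfrac1{\sqrt2}\partial_{x_i},\tfrac1{\sqrt2}\partial_{y_i}\}$ is a real orthonormal frame. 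Since $\mathcal P$, $|\mathcal Ric|^2$, $|\mathcal R|^2$ are scalars, I may compute them in any frame; the advantage of the complexified frame is that it diagonalizes the type decomposition of the curvature.

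The key structural input is that on a K\"ahler manifold the Levi--Civita connection agrees with the Chern connection, so the complexified Riemann tensor is of type $(1,1)$ in each of its two antisymmetric index pairs. Concretely, in the basis $\{\partial_{z_i},\partial_{\bar z_i}\}$ the only nonvanishing components of $\mathcal R$ are those with exactly one holomorphic and one antiholomorphic index in each pair, and these are the K\"ahler curvature tensor of \eqref{eqcur1}: $\mathcal R(\partial_{z_i},\partial_{\bar z_j},\partial_{z_k},\partial_{\bar z_l})=R_{i\bar jk\bar l}$, with the remaining nonzero components determined from this by the antisymmetries $\mathcal R_{ABCD}=-\mathcal R_{BACD}=-\mathcal R_{ABDC}$. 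I would establish this identification directly from \eqref{eqcur1} together with the vanishing of the Christoffel symbols of mixed type in K\"ahler geometry.

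With this in hand, each identity becomes a bookkeeping of index patterns. Because the complexified metric is off-diagonal, $g(\partial_{z_i},\partial_{\bar z_j})=\delta_{ij}$ at $p$, raising an index interchanges a holomorphic and an antiholomorphic slot. For the scalar curvature, contracting $\mathcal P=\sum_{A,B}\mathcal R(E_A,E_B,E_B,E_A)$ over the frame $\{X_i,Y_i\}$ (here $E_A,E_B$ range over these $2d$ vectors) and re-expressing in complex indices, the sum splits according to whether each pair is holomorphic--antiholomorphic or the reverse; exactly two of these patterns survive and each contributes $\rho$, giving $\mathcal P=2\rho$. For $|\mathcal Ric|^2=\sum_{A,B}\mathcal Ric(E_A,E_B)^2$, the Ricci tensor is itself of type $(1,1)$, with nonzero complex components $\mathcal Ric_{i\bar j}$ and $\mathcal Ric_{\bar ij}$; the two patterns each contribute $|Ric|^2$, yielding the factor $2$. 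For $|\mathcal R|^2$, there are $2\times2=4$ admissible barred/unbarred patterns for the two index pairs, each contributing $|R|^2$ after the index-raising swap, giving the factor $4$. A one-dimensional model ($d=1$, single component $R_{1\bar11\bar1}$) confirms each multiplicity and fixes all conventions: there the four nonzero components equal $\pm R_{1\bar11\bar1}$, and contracting with the off-diagonal inverse metric gives $|\mathcal R|^2=4R_{1\bar11\bar1}^2$.

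The main obstacle is keeping every factor of $2$ straight, since three independent sources appear: the frame normalization $|\partial_{x_i}|^2=2$ (equivalently the $\tfrac1{\sqrt2}$ in $X_i,Y_i$, contributing $\tfrac14$ to any four-index expression), the off-diagonal form of the complexified inverse metric used to raise indices, and the multiplicities from summing over barred versus unbarred slots. One must check that these combine correctly, so that the normalization factors cancel against the expansion of the real frame into complex vectors and the net multiplicity is exactly $4$, $2$, and $2$, rather than being over- or under-counted. Performing the computation entirely in the complexified basis with the off-diagonal metric, instead of expanding $X_i,Y_i$ term by term, is the cleanest way to avoid these pitfalls and reduces the verification to counting the surviving index patterns, as in the one-dimensional check.
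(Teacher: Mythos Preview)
Your proposal is correct and follows a genuinely different route from the paper's proof in the appendix. The paper works entirely in a real orthonormal frame $e_1,\dots,e_{2d}$ with $Je_i=e_{d+i}$: it expands $R(y_i,\bar y_j,y_k,\bar y_l)$ into sixteen real curvature components, uses the $J$-invariance $R(u,v,w,y)=R(Ju,Jv,w,y)$ repeatedly, and must show by antisymmetry tricks that certain cross terms such as $\sum R(e_i,e_j,e_k,e_l)R(e_i,Je_j,Je_k,e_l)$ vanish before regrouping the remaining squares into $4|R|^2$; for the Ricci identities it diagonalizes both tensors and invokes the first Bianchi identity explicitly. Your approach instead stays in the complexified frame with the off-diagonal metric $g^{i\bar j}=\delta_{ij}$ and uses a single structural fact---that the Riemann tensor is of type $(1,1)$ in each antisymmetric pair---so that the norm computations reduce to counting the $4$, $2$, and $2$ surviving barred/unbarred index patterns. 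This is cleaner and more conceptual: it bypasses the cross-term cancellations and the Bianchi step entirely, and the $d=1$ sanity check fixes all normalization ambiguities at once. The paper's approach, by contrast, has the virtue of being completely explicit in real coordinates and never appealing to the type decomposition as a black box, which may be preferable for readers less familiar with complex geometry. Either argument proves the lemma; yours is the one usually found in complex-geometry texts.
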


\begin{example}\label{h9}
We know that on a K\"ahler manifold, the square distance function
$\xi(z,z'):=dist(z,z')^2$ satisfies the following equation
\cite{Rie},
$$g^{i\bar j}(z)\frac{\partial \xi(z,z')}{\partial z_i}\frac{\partial \xi(z,z')}{\partial\bar z_j}=2\xi(z,z'),$$
which uniquely determines $\xi(z,z')$.

In particular, in K\"ahler normal coordinates centered at $z'=0$, we
have
$$g^{i\bar j}(z)=\delta_{ij}-g_{j\bar i k\bar l}z_k\bar z_l+O(|z|^3),$$
which implies
$$\xi(z,0)=dist(z,0)^2=2|z|^2+\frac{2}{3}g_{i\bar j k\bar l}z_i\bar z_j z_k\bar z_l+O(|z|^4).$$

By Remark \ref{rm2}, we could take $f(s)=s$ and $w=n$ in
Polterovich's formula \eqref{eqpol}. In particular,
$$a_1=-\frac{1}{2}\square^2(dist(z,0)^2).$$
Here we used the relation of real and complex Laplacians
$\Delta=-2\square$.

Since
$$\square^2=\left[\bullet\, 2\right]-\left[\xymatrix{
        *+[o][F-]{1} \ar@/^/[r]^1
         &
        \bullet\, \ar@/^/[l]^1}\right],$$
we have
\begin{align*}
a_1&=-\frac{1}{2}\left( \left[\bullet\,
2\right]\left(\frac{2}{3}g_{i\bar j k\bar l}z_i\bar z_j z_k\bar
z_l\right)- \left[\xymatrix{
        *+[o][F-]{1} \ar@/^/[r]^1
         &
        \bullet\, \ar@/^/[l]^1}\right]\big(2|z|^2\big)  \right)\\
        &=-\frac{1}{2}\left(\frac{8}{3}\left[\xymatrix{*+[o][F-]{2}}\right]-2\left[\xymatrix{*+[o][F-]{2}}\right]\right)\\
        &=-\frac{1}{3}\left[\xymatrix{*+[o][F-]{2}}\right]=-\frac{1}{3}g_{i\bar
i j\bar j}=\frac{1}{3}R_{i\bar i j\bar j}=\frac{1}{3}\rho,
\end{align*}
where $(i,\bar i),\,(j, \bar j)$ are paired indices to be
contracted.

By Lemma \ref{h7}, we get the well-known formula
$a_1=\frac{1}{6}\mathcal P$ in Riemannian tensors.
\end{example}
\smallskip

In the next example, we apply the graph theoretic formula
\eqref{eqh3} and Lemma \ref{h7} to compute $a_1$ and $a_2$ for
Riemmanian manifolds.

\begin{example} We represent a digraph as a weighted graph. The weight of a directed edge is
the number of multi-edges. The number attached to a vertex denotes the number of its self-loops. A
vertex without loops is denoted by a small circle $\circ$.

There is only one stable graph with weight $1$. By \eqref{eqh3},
\begin{equation}\label{eqa1}
a_1=-\frac{1}{3}\left[\xymatrix{*+[o][F-]{2}}\right],
\end{equation}
in agreement with the calculation in the previous example.

There are four stable graphs with weight $2$. By \eqref{eqh3},
\begin{align*}
a_2=&-\frac{2}{15}\left[\xymatrix{*+[o][F-]{3}}\right]+\frac{1}{18}\left[\xymatrix{*+[o][F-]{2}}\mid \xymatrix{*+[o][F-]{2}}\right]+\frac{23}{90}\left[\xymatrix{
        *+[o][F-]{1} \ar@/^/[r]^1
         &
        *+[o][F-]{1} \ar@/^/[l]^1} \right ]+ \frac{7}{45}
         \left[\xymatrix{ \circ  \ar@/^/[r]^2 & \circ \ar@/^/[l]^2
         }\right]\\
        =& -\frac{2}{15}g_{i\bar i j\bar j k\bar k}+\frac{1}{18}g_{i\bar i j\bar j}g_{k\bar k l\bar l} +\frac{23}{90} g_{i\bar i
k\bar l}g_{j\bar j l\bar k}+\frac{7}{45}g_{i\bar j k\bar l}g_{j\bar i
l\bar k}\\
=&-\frac{2}{15}\big(-\square \rho+|R|^2+2|Ric|^2\big)+\frac{1}{18}\rho^2+\frac{23}{90}|Ric|^2+\frac{7}{45}|R|^2\\
=&\frac{2}{15}\square \rho+\frac{1}{18}\rho^2-\frac{1}{90}|Ric|^2+\frac{1}{45}|R|^2.
\end{align*}

By Lemma \ref{h7} and $\Delta=-2\square$, we get the well-known
formulas of $a_1,a_2$ in Riemannian tensors.
$$a_1=\frac{1}{6}\mathcal P,\qquad a_2=-\frac{1}{30}\Delta\mathcal P+\frac{1}{72}\mathcal P^2-\frac{1}{180}|\mathcal Ric|^2+\frac{1}{180}|\mathcal R|^2.$$
\end{example}

\vskip 30pt

\appendix

\section{Riemannian and K\"ahler curvature tensors} \label{ap}
We give a proof of Lemma \ref{h7}.
Denote by $J$ the complex structure. If $e_1,\dots,e_{2d}$ is an orthonormal basis such that
$Je_i$ = $e_{d+i}$ for $i = 1,\dots,d$, then $y_i=\frac{1}{\sqrt{2}}(e_i-\sqrt{-1}Je_i)$, $1\leq i\leq d$ is a unitary basis.
By using $R(u,v,w,y)=R(Ju,Jv,w,y)=R(u,v,Jw,Jy)$, it is not difficult to get
\begin{align}\label{eqh6}
R(y_i,\bar y_j,y_k,\bar y_l)=& R(e_i,e_j,e_k,e_l)+R(e_i,Je_j,Je_k,e_l)\\
&\qquad  -\sqrt{-1}R(Je_i,e_j,e_k,e_l)-\sqrt{-1}R(e_i,e_j,Je_k,e_l).\nonumber
\end{align}
By definition, we have
\begin{align*}
|R|^2=&\sum_{i,j,k,l=1}^d R(y_i,\bar y_j,y_k,\bar y_l)R(y_j,\bar y_i,y_l,\bar y_k)\\
=&\sum_{i,j,k,l=1}^d R(y_i,\bar y_j,y_k,\bar y_l)\overline{R(y_i,\bar y_j,y_k,\bar y_l)}\\
=&\sum_{i,j,k,l=1}^d [R(e_i,e_j,e_k,e_l)+R(e_i,Je_j,Je_k,e_l)]^2\\
&\qquad +\sum_{i,j,k,l=1}^d[R(Je_i,e_j,e_k,e_l)+R(e_i,e_j,Je_k,e_l)]^2\\
=&\sum_{i,j,k,l=1}^d [R(e_i,e_j,e_k,e_l)^2+R(e_i,Je_j,Je_k,e_l)^2\\
&\qquad\qquad +R(Je_i,e_j,e_k,e_l)^2+R(e_i,e_j,Je_k,e_l)^2].
\end{align*}
The last equality follows from
\begin{align} \label{eqh5}
\sum_{i,j,k,l=1}^d R(e_i,e_j,e_k,e_l)R(e_i,Je_j,Je_k,e_l)&=0, \\
\sum_{i,j,k,l=1}^d R(Je_i,e_j,e_k,e_l)R(e_i,e_j,Je_k,e_l)&=0.
\end{align}
We prove the first identity, the proof of second identity is similar.
\begin{align*}
&\sum_{i,j,k,l=1}^d R(e_i,e_j,e_k,e_l)R(e_i,Je_j,Je_k,e_l)
\\
=&-\sum_{i,j,k,l=1}^d R(e_i,e_j,e_k,e_l)R(e_i,Je_j,e_k,Je_l)\\
=&-\sum_{i,j,k,l=1}^d R(e_i,e_j,e_l,e_k)R(e_i,Je_j,Je_l,e_k),
\end{align*}
which implies \eqref{eqh5}.
On the other hand,
\begin{align*}
|\mathcal R|^2=&\sum_{i,j,k,l=1}^d [R(e_i,e_j,e_k,e_l)^2+R(Je_i,Je_j,e_k,e_l)^2\\
&\qquad\qquad +R(e_i,e_j,Je_k,Je_l)^2+R(Je_i,Je_j,Je_k,Je_l)^2]\\
& +\sum_{i,j,k,l=1}^d [R(e_i,Je_j,Je_k,e_l)^2+R(e_i,Je_j,e_k,Je_l)^2\\
&\qquad\qquad +R(Je_i,e_j,Je_k,e_l)^2+R(Je_i,e_j,e_k,Je_l)^2]\\
& +\sum_{i,j,k,l=1}^d [R(Je_i,e_j,e_k,e_l)^2++R(e_i,Je_j,e_k,e_l)^2\\
&\qquad\qquad R(Je_i,e_j,Je_k,Je_l)^2+R(e_i,Je_j,Je_k,Je_l)^2]\\
& +\sum_{i,j,k,l=1}^d [R(e_i,e_j,Je_k,e_l)^2+R(e_i,e_j,e_k,Je_l)^2\\
&\qquad\qquad +R(Je_i,Je_j,Je_k,e_l)^2+R(Je_i,Je_j,e_k,Je_l)^2]\\
=&\; 4\sum_{i,j,k,l=1}^d [R(e_i,e_j,e_k,e_l)^2+R(e_i,Je_j,Je_k,e_l)^2\\
&\qquad\qquad +R(Je_i,e_j,e_k,e_l)^2+R(e_i,e_j,Je_k,e_l)^2]\\
=&\; 4|R|^2.
\end{align*}

Next we look at the Ricci curvature. By applying a unitary transformation if necessary, we may assume that both $Ric(y_i,\bar y_j)_{1\leq i,j\leq d}$ and $\mathcal Ric(e_i,e_j)_{1\leq i,j\leq 2d}$ are diagonal matrices. From \eqref{eqh6}, we have
\begin{align*}
Ric(y_i,\bar y_i)=&\sum_{k=1}^d R(y_i,\bar y_i,y_k,\bar y_k)\\
=&-\sum_{k=1}^d R(e_i,Je_i,e_k,Je_k)\\
=&\sum_{k=1}^d R(Je_i,e_k,e_i,Je_k)+\sum_{k=1}^d R(e_k,e_i,Je_i,Je_k)\\
=&\sum_{k=1}^d R(e_i,Je_k,Je_k,e_i)+\sum_{k=1}^d R(e_i,e_k,e_k,e_i)\\
=&\mathcal Ric(e_i,e_i)
\end{align*}
where we used the first Bianchi identity in the third equality. Similarly we can prove $Ric(y_i,\bar y_i)=\mathcal Ric(Je_i,Je_i)$. Therefore
$|\mathcal Ric|^2=2|Ric|^2$ and $\mathcal P=2\rho$.

$$ \ \ \ \ $$

\


\begin{thebibliography}{999}

\bibitem{AK} S. Agmon and Y. Kannai, {\em On the asymptotic behavior of spectral functions
and resolvent kernels of elliptic operators}, Israel J. Math. {\bf 5}
(1967), 1--30.

\bibitem{Avr} I. G. Avramidi, A covariant technique for the calculation of the one-loop
effective action, Nuclear Phys. B {\bf 355} (1991), 557--558.

\bibitem{Ber} M. Berger, {\em Encounter with a geometer: Eugenio Calabi}. Manifolds and geometry (Pisa, 1993), 20--60, Sympos. Math., XXXVI, Cambridge Univ. Press, Cambridge, 1996.

\bibitem{Cha} I. Chavel, {\em Eigenvalues in Riemannian geometry}, Pure and Applied Mathematics, vol. 115, Academic Press Inc., Orlando, FL, 1984.

\bibitem{Eng} M. Engli\v{s}, {\em The asymptotics of a Laplace integral on a K\"ahler
manifold}, J. Reine Angew. Math. {\bf 528} (2000), 1--39.

\bibitem{Gil} P. Gilkey, {\em The spectral geometry of a Riemannian manifold}, J. Differential Geom. {\bf 10} (1975), 601--618.

\bibitem{Gil2} P. Gilkey, {\em Invariance theory, the heat equation and the Atiyah-Singer index
theorem}, Publish or Perish, 1984.

\bibitem{Gri} D. Grieser, {Notes on heat kernel asymptotics}, online notes.

\bibitem{Kir} K. Kirsten, {\em Spectral functions in mathematics and physics}. Chapman\& Hall/CRC Press, Boca Raton, FL, 2002.

\bibitem{Liu} K. Liu, {\em Heat kernels, symplectic geometry, moduli spaces and finite groups}, Surv. Differ. Geom. {\bf 5} (1999), 527--542.

\bibitem{MM} X. Ma and G. Marinescu, {\em Holomorphic Morse Inequalities and
Bergman Kernels}, Progress in Mathematics vol. 254, Birkh\"auser
Boston, Boston, MA (2007).

\bibitem{MP} S. Minakshisundaram and A. Pleijel, {\em Some properties of the eigenfunetions
of the Laplace-operator on Riemannian manifolds}, Canad. J. Math. {\bf 1} (1949), 242--256.

\bibitem{Pol} I. Polterovich, {\em From Agmon-Kannai expansion to Korteweg-de Vries hierarchy}, Lett. Math. Phys. 49 (1999), no. 1, 71--77.

\bibitem{Pol2} I. Polterovich, {\em Heat invariants of Riemannian manifolds},
Israel J. Math. {\bf 119} (2000), 239--252.

\bibitem{Pol3} I. Polterovich, {\em Combinatorics of the heat trace on spheres}, Canad. J. Math. {\bf 54} (2002), no. 5, 1086--1099.

\bibitem{Rie} M. Riesz, {\em L'int\'egrale de Riemann-Liouville et le probl\`eme de
Cauchy}, Acta Math. {\bf81} (1949), 1--223.

\bibitem{Ros} S. Rosenberg, {\em The Laplacian on a Riemannian manifold}, Cambridge University
Press, 1997.

\bibitem{Sak} T. Sakai, {\em On eigen-values of Laplacian and curvature of Riemannian manifold},
Toh\^oku Math. J. {\bf 23} (1971), 589--603.

\bibitem{vdV} A. E. M. van de Ven, {\em Index-free heat kernel coefficients}, Classical Quantum Gravity {\bf15} (1998), no. 8, 2311--2344.

\bibitem{Vas} D. Vassilevich,  {\em Heat kernel expansion: user's manual}, Phys. Rep.
{\bf388} (2003), 279--360.

\bibitem{Wei} G. Weingart, {\em A characterization of the heat kernel coefficients}, arXiv:math/0105144.

\bibitem{Wei2} G. Weingart, {\em Combinatorics of heat kernel coefficients}, Dissertation, Rheinische Friedrich-Wilhelms-Universit\"at Bonn, Bonn, 2004.

\bibitem{Xu} H. Xu, {\em A closed formula for the asymptotic expansion of the Bergman
kernel}, Comm. Math. Phys. {\bf 314} (2012), 555--585.

\bibitem{Xu2} H. Xu, {\em Weyl invariant polynomial and deformation quantization on K\"ahler manifolds}, J. Geom. Phys. {\bf 76} (2014), 124--135.

\bibitem{XY} H. Xu and S.-T. Yau, {\em Trees and tensors on K\"ahler manifolds}, Ann. Global Anal. Geom. {\bf 44} (2013), 151--168.

\bibitem{Zel}
S. Zelditch, {\em Survey of the inverse spectral problem}, Surv. Differ. Geom. IX (2004) 401--467.

\bibitem{Zha} W. Zhang, {\em Heat kernels and the index theorems on even and odd dimensional manifolds}, Proceedings of the ICM, Vol. II (Beijing, 2002), 361--369, Higher Ed. Press, Beijing, 2002.


\end{thebibliography}
\end{document}